\documentclass[reqno,12pt]{amsart}
\usepackage{mathrsfs}
\usepackage{amscd}
\usepackage{amsmath}
\usepackage{latexsym}
\usepackage{amsfonts}
\usepackage{amssymb}
\usepackage{amsthm}
\usepackage{graphicx}

\def\R{\mathbb{R}}

\parindent = 20 pt
\parskip = 5 pt
\textwidth 6.5in \textheight 9.6in \setlength{\topmargin}{0.1in}
\addtolength{\topmargin}{-\headheight}
\addtolength{\topmargin}{-\headsep}

\setlength{\oddsidemargin}{0in} \oddsidemargin  0.0in
\evensidemargin 0.0in

\setlength{\oddsidemargin}{0in} \oddsidemargin  0.0in
\evensidemargin 0.0in \textwidth 6.5in \textheight 9.2in
\setlength{\topmargin}{0.1in} \addtolength{\topmargin}{-\headheight}
\addtolength{\topmargin}{-\headsep}

\numberwithin{equation}{section}

\newtheorem{thm}{Theorem}[section]
\newtheorem{lem}{Lemma}[section]

\newtheorem{prop}{Proposition}[section]

\newtheorem{remark}{Remark}[section]

\makeatletter
\newcommand{\Extend}[5]{\ext@arrow0099{\arrowfill@#1#2#3}{#4}{#5}}
\makeatother

\begin{document}

\setcounter{page}{1}

\title[Global existences and asymptotic behavior for semilinear heat equation]{Global existence and asymptotic behavior for $L^2$-critical semilinear heat equation in negative Sobolev spaces}

\author{Avy Soffer}
\address{Rutgers University\\
Department of Mathematics\\
110 Frelinghuysen Rd.\\
Piscataway, NJ, 08854, USA\\}
\address{Department of Mathematics\\
Hubei Key Laboratory of Mathematical Science\\
Central China Normal University\\
Wuhan 430079, China.\\}
\email{soffer@math.rutgers.edu}
\thanks{}

\author{Yifei Wu}
\address{Center for Applied Mathematics\\
Tianjin University\\
Tianjin 300072, China}
\email{yerfmath@gmail.com}
\thanks{}

\author{Xiaohua Yao}
\address{Department of Mathematics\\
Hubei Key Laboratory of Mathematical Science\\
Central China Normal University\\
Wuhan 430079, China.\\}
\email{yaoxiaohua@mail.ccnu.edu.cn}

\subjclass[2000]{35K05, 35B40, 35B65.}

\date{}

\maketitle

\begin{abstract}\noindent
In this paper, we consider the global Cauchy  problem for the $L^2$-critical semilinear heat equations
$
\partial_t h=\Delta h\pm |h|^{\frac4d}h,
$
with $h(0,x)=h_0$, where  $h$ is an unknown real function defined on $ \R^+\times\R^d$.  In most of the studies on this
subject, the initial data $h_0$ belongs to Lebesgue spaces $L^p(\R^d)$ for some $p\ge 2$ or
to subcritical Sobolev space $H^{s}(\R^d)$ with $s>0$. {\it First,} we  prove that  there exists some positive constant $\gamma_0$ depending on $d$, such that the Cauchy problem is locally and globally well-posed for any initial data $h_0$ which is radial, supported away from the origin
and in the negative Sobolev space $\dot H^{-\gamma_0}(\R^d)$. In particular, it leads to local and global existences of the solutions to Cauchy problem considered above for the initial data in a proper subspace of $L^p(\R^d)$ with some $p<2$. {\it Secondly,} the sharp asymptotic behavior of the solutions  ( i.e. $L^2$-decay estimates ) as $t\to +\infty$ are obtained  with arbitrary large initial data $h_0\in \dot H^{-\gamma_0}(\R^d)$ in the defocusing case  and  in the focusing case with suitably small initial data $h_0$.

\end{abstract}


 \baselineskip=20pt

\section{Introduction}
\subsection{Main results}
Consider the initial value problem for a semilinear heat equation:
\begin{equation}\label{heat'}
   \left\{ \aligned
    &\partial_t h=\Delta h\pm |h|^{\gamma-1}h, \\
    &h(0,x)=h_0(x), \quad x\in \R^d,
   \endaligned
  \right.
 \end{equation}
where  $h(t,x)$ is an unknown real function defined on $ \R^+\times\R^d$, $d\ge 2$, $\gamma>1$. The positive sign ``+" in nonlinear term of \eqref{heat'} denotes focusing source,   and the negative sign ``-" denotes the defocusing one. The Cauchy problem \eqref{heat'} has been extensively studied in Lebesgue space $L^p(\R^d)$ by many people, see e.g. \cite{Bre-Caz, Bre-Fre, Bre-Pel, Ga-Va, Giga, Ma-Mer, Mer, Miao-Zhang, Monn, Ni-Sacks, Ribaud, Ruf-Terrano, Tan,Weiss0, Weiss1, Weiss2}. The equation
 enjoys an interesting property of scaling invariance
\begin{equation}\label{scaling}
h_\lambda(t,x):=\lambda^{2/(\gamma-1)}h(\lambda^2t, \lambda x), \ h_\lambda(0,x):= \lambda^{2/(\gamma-1)}h_0(\lambda x),  \ \lambda>0;
\end{equation}
that is, if $h(t,x)$ is the solution of heat equation \eqref{heat'}, then $h_\lambda(t,x)$ also solves the equationwith the scaling data $\lambda^{2/\gamma}h_0(\lambda x)$.
An important fact is that Lebesgue space $L^{p_c}(\R^d)$ with $p_c=\frac{d(\gamma-1)}{2}$ is the only one invariant under the same scaling transform: \begin{equation}\label{scaling2}h_0(x)\mapsto\lambda^{2/(\gamma-1)}h_0(\lambda x).\end{equation}
If we consider the initial data $h_0\in L^p(\R^d)$, then the scaling index
$p_c=\frac{d(\gamma-1)}{2}$
plays a critical role in the local/global well-posedness of \eqref{heat'}. Roughly speaking, one can divide the dynamics of \eqref{heat'} into the following three different regimes:  (A) {\it the subcritical case $p>p_c $},  (B) {\it the critical case $p=p_c$}, (C) {\it the supercritical case $p<p_c$}.  More specifically,

In the subcritical case (A) ( i.e. $p> p_c$ ), Weissler \cite{Weiss0} first proved the local existence and unconditional uniqueness of solution for initial values in $L^p$ with $p>p_c$ and $p\ge\gamma$. In \cite{Weiss1}, the local existence and conditional uniqueness were further considered in the solution space $C([0, T); L^p(\R^d))\cap L^\infty_{loc}((0,T]; L^\infty(\R^d))$ for initial data in $L^p$ when $p>p_c$ and $p\ge\gamma$. Later, Brezis and Cazenave \cite{Bre-Caz} proved the unconditional uniqueness of Weissler's solution mentioned above.

In the critical case (B) ( i.e. $p=p_c$ ), the local existence and conditional uniqueness were obtained  by  Giga \cite{Giga} and Weissler \cite{Weiss1} for initial data in $L^p$ with $p>1$ and $p=p_c$. The unconditional uniqueness was proved in the cases $p=p_c>\gamma$ by Brezis and Cazenave \cite{Bre-Caz}.  In {\it double critical case} $p=p_c=\gamma$ ( i.e. $p=\gamma=\frac{d}{d-2}$ ), the local conditional wellposedness of the problem \eqref{heat'} was due to Weissler in \cite{Weiss1}, but the unconditional uniqueness fails, see Ni-Sacks \cite{Ni-Sacks}, Terraneo \cite{Terran}.

In the supercritical case $(C)$  ( i.e. $p<p_c$ ), it seems that  there exists no local solution in any reasonable sense for some initial data $h_0\in L^p(\R^d)$ with $p<p_c$. In particular, in the focusing case, there exists a nonnegative function $h_0\in L^p(\R^d)$ such that  \eqref{heat'} does not admit any nonnegative classical $L^p$-solution in $[0,T)$ for any $T>0$, see e.g. Brezis and Cabr\'e \cite{Bre-Cab}, Brezis and Cazenave \cite{Bre-Caz}, Haraux-Weissler \cite{Har-Weissler} and  Weissler \cite {Weiss1, Weiss2}. Also, we refer the readers to the book Quitnner-Souplet \cite{Qui-Soup} for many related topics and references.

For singular initial values, like $|x|^{-\gamma}$ or $\delta(x)$, the behavior of the solutions have also  been extensively studied, see \cite{Cazenave-Dickstein-Naumkin-Weissler, Hattab-Slim-Weissler, Tayachi-Weissler} and the references therein.

In this paper, we  are mainly concerned with global existences and asymptotic behavior of solutions  for {\it a large class of ~``supercritical" initial data $h_0\in L^p(\R^d)$ with some $p<p_c,$ } and more generally, a class of initial data in $ \dot H^{-\gamma}$ for some  $\gamma>0$ depending on $d$. For simplicity, we only consider  the Cauchy
problem for the $L^2$-critical semilinear heat equations,
\begin{equation}\label{heat}
   \left\{ \aligned
    &\partial_t h=\Delta h+\mu|h|^{\frac4d}h,\ \ \mu=\pm 1, \\
    &h(0,x)=h_0(x), \quad x\in \R^d.
   \endaligned
  \right.
 \end{equation}
That is, $p_c=2$\ ( i.e $\gamma=1+\frac4d $ ), we will first prove that  there exists some positive constant $\gamma_0$ depending on $d$, such that the Cauchy problem is locally and globally well-posed for any initial data $h_0$ is radial, supported away from the origin
and in the negative Sobolev space $\dot H^{-\gamma_0}(\R^d)$. It  includes certain $L^p$-spaces with $p<2 \ ( p_c=2 )$ as initial data subspace (see Remark \ref{rem:01} below). Moreover,  we also establish the sharp  point-wise decay estimates for the solution $h(t,x)$ in $L^2(\R^d)$ as  $ t \to +\infty$ for the initial datum $h_0 \in \dot H^{-\gamma_0}(\R^d)$ which is large in the defocusing case and  suitably small  in the focusing case data $h_0$.

The first results are given as follows:
\begin{thm} \label{thm:main1}
Let $ \mu=\pm1$, $d\ge 2$ and
 \begin{equation}\label{index}
\gamma_0\in\aligned
    &\Big[0, \frac{d-1}{d+2}\Big).\\
   \endaligned
 \end{equation}
Suppose that $h_0\in \dot H^{-\gamma_0}(\R^d)$ is a radial initial data satisfying
$
\mbox{supp } h_0\subset \{x:|x|\ge1\}.
$
Then there exists a time $\delta>0$ depending on $\|h_0\|_{H^{-\gamma_0}}$ and $d$ such that  a unique strong solution
 \begin{equation}\label{local solution}
 h\in C([0,\delta); L^2(\R^d)+\dot H^{-\gamma_0}(\R^d))\cap L^\frac{2(2+d)}{d}_{tx}([0,\delta]\times\R^d)
 \end{equation}
 to the equation \eqref{heat} exists with the initial data $h_0$.

Furthermore, if
$d>4$, then the following canonical decomposition holds in the sense that there exists  an unconditionally unique function $w$ in $C([0,\delta], L^2(\R^d))$ such that the solution $h$ of \eqref{local solution} can be stated as
 \begin{equation}\label{struct}
h=e^{t\Delta}h_0+w.
 \end{equation}

\end{thm}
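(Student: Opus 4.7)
The plan is to solve \eqref{heat} by a contraction mapping argument applied to the Duhamel map
$$\Phi(h)(t) := e^{t\Delta}h_0 + \mu\int_0^t e^{(t-s)\Delta}\bigl(|h|^{4/d}h\bigr)(s)\,ds$$
on a closed ball of the $L^2$-critical Strichartz space $X_\delta := L^{2(d+2)/d}_{t,x}([0,\delta]\times\R^d)$. The nonlinear part of $\Phi$ is controlled by the standard dual heat Strichartz estimate
$$\Bigl\|\int_0^t e^{(t-s)\Delta}F(s)\,ds\Bigr\|_{X_\delta\cap L^\infty_t L^2_x} \lesssim \|F\|_{L^{2(d+2)/(d+4)}_{t,x}},$$
combined with the pointwise identity $\bigl\||h|^{4/d}h\bigr\|_{L^{2(d+2)/(d+4)}_{t,x}} = \|h\|_{X_\delta}^{1+4/d}$; this yields a contraction on a sufficiently small ball of $X_\delta$ as soon as the free evolution $e^{t\Delta}h_0$ has small $X_\delta$-norm.

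The main obstacle is therefore the linear estimate
$$\|e^{t\Delta}h_0\|_{X_\delta} \longrightarrow 0 \quad \text{as } \delta\to 0^+,$$
since standard Strichartz requires $h_0\in L^2$ while here $h_0\in \dot H^{-\gamma_0}$ is strictly rougher. Both structural assumptions must be used crucially. My approach is to write $h_0=\chi h_0$ for a smooth radial cutoff $\chi$ vanishing on $\{|x|\le 1/2\}$ and equal to $1$ on $\{|x|\ge 1\}$, and then prove a weighted radial Strichartz bound of the schematic form
$$\bigl\|e^{t\Delta}\bigl[\chi\,(-\Delta)^{\gamma_0/2} g\bigr]\bigr\|_{X_\delta} \lesssim \|g\|_{L^2}$$
for radial $g\in L^2$, with $g:=(-\Delta)^{-\gamma_0/2}h_0$. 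The point is that the spatial cutoff $\chi$ tames the radial weight near the origin, while away from the origin the heat semigroup applied to a radial function enjoys the improved decay factor $|x|^{-(d-1)/2}$ (the parabolic analogue of the usual radial Sobolev/Strichartz improvements). A commutator estimate for $[\chi,(-\Delta)^{\gamma_0/2}]$ on radial $L^2$ functions lets one commute the cutoff past the fractional derivative with negligible loss. Balancing the radial gain $|x|^{-(d-1)/2}$ against the scaling of $X_\delta$ and the loss of $\gamma_0$ fractional derivatives produces the sharp closing threshold $\gamma_0<(d-1)/(d+2)$, and I expect this weighted radial estimate to be the principal technical step of the whole proof.

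With the linear and nonlinear bounds in place, Banach fixed point yields a unique $h\in X_\delta$, and the Duhamel remainder $w:=h-e^{t\Delta}h_0$ lies in $C([0,\delta];L^2)$ by the dual Strichartz bound applied to the nonlinear integral, which gives both the full solution space in \eqref{local solution} and the decomposition \eqref{struct}. For the unconditional uniqueness of $w$ in $C([0,\delta];L^2)$ when $d>4$, I would bootstrap any a priori $L^\infty_tL^2_x$ candidate $w$ into $X_\delta$: since the nonlinearity exponent $1+4/d$ is strictly less than $2$ for $d>4$, the difference $w_1-w_2$ can be estimated directly in $C([0,\delta];L^2)$ using parabolic smoothing and Sobolev embedding, without requiring any Strichartz control beyond the assumed $L^2$-regularity. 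Identification with the contraction solution then closes the uniqueness and completes the proof.
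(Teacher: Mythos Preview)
Your strategy is sound and close to the paper's, but the execution of the key linear estimate and of the uniqueness step differ in ways worth flagging.

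For the linear bound $\|e^{t\Delta}h_0\|_{L^{2(d+2)/d}_{tx}}\lesssim\|h_0\|_{\dot H^{-\gamma_0}}$, the paper avoids commutators entirely. It proves directly that for radial $f$ with $\mathrm{supp}\,f\subset\{|x|\ge1\}$ one has $\|e^{t\Delta}f\|_{L^q_{tx}(\R^+\times\R^d)}\lesssim\||\nabla|^\delta f\|_{L^2}$ with $\delta<0$, by interpolating two endpoints: the $L^\infty_{tx}$ bound comes from $\|e^{t\Delta}f\|_{L^\infty}\le\|f\|_{L^\infty}\le\||x|^{d/2-s}f\|_{L^\infty}\lesssim\||\nabla|^s f\|_{L^2}$ (radial Sobolev with $s\in(\tfrac12,1)$, using the support to insert the weight for free), and the other endpoint is the standard smoothing $\|\nabla e^{t\Delta}f\|_{L^2_{tx}}\lesssim\|f\|_{L^2}$. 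Interpolating at $q=2(d+2)/d$ gives exactly the threshold $\gamma_0<(d-1)/(d+2)$. This is considerably cleaner than routing through $[\chi,(-\Delta)^{\gamma_0/2}]$. Structurally, rather than contracting on $h$ directly, the paper splits $h_0=v_0+w_0$ by a frequency cutoff at level $N$ (with $w_0\in L^2$, $v_0$ retaining radiality and support), evolves $v_L=e^{t\Delta}v_0$ linearly, and runs the fixed point on $w=h-v_L$; the paper itself notes that your direct route through the Strichartz space would also give local existence, but the decomposition is set up because it feeds into the later global and decay arguments.

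Your uniqueness sketch has a small gap. The difference $w_1-w_2$ is multiplied not only by $|w_j|^{4/d}$ but also by $|e^{s\Delta}h_0|^{4/d}$, and $e^{s\Delta}h_0$ is \emph{not} in $L^\infty_tL^2_x$, so ``parabolic smoothing plus Sobolev from $L^2$ alone'' does not close on that cross term. The paper handles it with a subcritical inhomogeneous estimate $\|\int_0^t e^{(t-s)\Delta}F\,ds\|_{L^p_tL^2_x}\lesssim\|F\|_{L^{p_1}_tL^{r_1}_x}$ (its Lemma~2.2), which lets one H\"older against the finite $L^{2(d+2)/d}_{tx}$ norm of $e^{t\Delta}h_0$; the pure $|w_j|^{4/d}|w_1-w_2|$ terms are treated, as you suggest, by maximal $L^p$-regularity together with the Sobolev embedding $(-\Delta)^{-1}:L^{2d/(d+4)}\to L^2$, which is precisely where $d>4$ enters. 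So some Strichartz-type control on the linear part is still needed in the uniqueness step.
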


Several further remarks on Theorem \ref{thm:main1} above are given as follows:
\begin{remark}\label{rem:01}
 In Theorem \ref{thm:main1}, if the support condition on $h_0$ is replaced by the $ \mbox{supp } h_0\subset\{x:|x|\ge \eta\}$ for any fixed $\eta>0$, then our results have no essential changes besides the lifespan constant $\delta$ surly depending on $\eta$.
 Moreover, note that if $h_0\in L^p$ for some $p\in (1,2)$, then there exists some $\epsilon>0$ such that $h_0\in\dot H^{-\varepsilon}(\R^d) $ and
$$\|h_0\|_{\dot H^{-\varepsilon}(\R^d)}\lesssim \|h_0\|_{L^p(\R^d)}$$
by the Sobolev embedding estimate. Thus,  Theorem \ref{thm:main1}  shows that the solution $h$ of the equation \eqref{heat}
always exists locally, and the solution $h$ of the defocusing equation \eqref{heat}
exists globally for any radial and supported away from zero initial datum $h_0$  in $L^p(\R^d),$ with certain supercritical index $p\in \Big(\frac{2(d+2)}{d+3},2\Big)$ for $d\ge 2$.
\end{remark}

\begin{remark}\label{rem:04}
It seems that the restriction $d>4$ is necessary for unconditional uniqueness. In fact, when $d=4$, the uniqueness problem is related to the ``double critical'' case ( i.e. $p=p_c=\gamma=\frac{d}{d-2}=2$ ). It is well-known that the unconditional uniqueness fails by Ni-Sacks \cite{Ni-Sacks} and Brezis and Cazenave \cite{Bre-Caz}.
 \end{remark}

We emphasize that  the key part of the proof of Theorem \ref{thm:main1} is to prove the following time-space estimate
\begin{equation}\label{Strichartz type}
\big\| e^{t\Delta}h_0\big\|_{L^{2+(4/d)}(\R^+\times\R^d)}\lesssim \||\nabla|^{-\gamma_0}h_0\|_{L^2(\R^d)}=\|h_0\|_{\dot H^{-\gamma_0}(\R^d)},
\end{equation}
if the $h_0$ and $\gamma_0$ satisfy with the conditions of Theorem \ref{thm:main1}.  This is compared with the following standard estimate ( also see  \eqref{nabla-L2'} below ):
\begin{equation*}
\big\| e^{t\Delta}h_0\big\|_{L^{2+(4/d)}(\R^+\times\R^d)}\lesssim\|h_0\|_{L^2(\R^d)},
\end{equation*}
which plays a central role in the well-posedness of the solution to the Cauchy problem \eqref{heat} in the critical space $L^2(\R^d)$. The estimate \eqref{Strichartz type}  actually indicates  that it is possible to study $L^2$ critical-Cauchy problem \eqref{heat} using the following initial space:
$$Z=\Big\{h_0\in \mathcal{S}'(\R^d); \ \ (t,x)\mapsto e^{t\Delta}h_0(x)\in L^{2+(4/d)}(\R^+\times\R^d) \Big\}$$
equipped with the norm
$$
\|h_0\|_Z=\| e^{t\Delta}h_0\|_{L^{2+(4/d)}(\R^+\times\R^d)}.
$$
We refer to Giga \cite{Giga} for the proof framework.
Note that the data space $Z$ is  a critical space in the scaling sense \eqref{scaling2},  which especially includes the $L^2(\R^d)$ and these initial datum $h_0$ of Theorem \ref{thm:main1} by the estimate \eqref{Strichartz type}.
However, we remark that,  working on $Z$ directly  does not give more than the estimates expressed in Theorem \ref{thm:main1}, which will serve the subsequent conclusions below.  So instead,  we decompose the initial data $h_0$ into the sum of $v_0$ and $w_0$ such that $v_0\in \dot H^{-\gamma_0}(\R^d)$ and $w_0\in L^2(\R^d);$ and then evolve the linear heat flow $v_L(t)$ for $v_0$ in the space $ \dot H^{-\gamma_0}(\R^d),$ and work on the nonlinear heat flow $w(t)$ for $w_0$ in $L^2(\R^d)$, where the estimate \eqref{Strichartz type} is indispensable for the present arguments. From the local behavior of the two flows we finally conclude the desired solution $h(t)$.

Next, we describe further results on {\it the global existence } of the solutions for the Cauchy
problem \eqref{heat}, with any large initial data $h_0\in \dot H^{-\varepsilon}(\R^d)$ in the defocusing case,  and in the focusing case with suitably small initial data $h_0$. However, we  remark that  for the focusing critical solution with a large initial data, the long time behaviors may have many different scenarios, which display possibly the blow-up or steady states, see e.g., Giga and Kohn \cite{Gi-Ko}. In the sequel, we do not consider these interesting phenomenons in the focusing cases. In fact, one can refer to Kavian \cite{Kav},
Kawanago \cite{Kawa}, Quittner \cite{Quit} and the book Quitnner-Souplet \cite{Qui-Soup} for many related results and a large number of references therein. Specifically, our global result  is as follows:

  \begin{thm} \label{thm:main2}Let $h_0$, $d$ and $\gamma_0$ satisfy the conditions in Theorem \ref{thm:main1}.  In addition,  if $\mu=-1$ ( the defocusing case ),  or if $\mu=1$ ( the focusing case ) and  $\|h_0\|_{\dot H^{-\gamma_0}(\R^d)}$ is small enough, then the solution $h(t)$ in \eqref{local solution} is global in time.
\end{thm}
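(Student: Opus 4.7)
The plan is to work with the decomposition $h=\Phi+w$ coming from Theorem~\ref{thm:main1}, where $\Phi(t)=e^{t\Delta}h_{0}$ and $w\in C_{t}L^{2}_{x}$ on the local interval (in low dimensions one replaces this by the auxiliary splitting $h_{0}=v_{0}+w_{0}$ used in the proof of Theorem~\ref{thm:main1}, which introduces only a harmless initial-data correction). The essential global input is the Strichartz-type bound \eqref{Strichartz type}, giving $\|\Phi\|_{L^{2+4/d}(\R^{+}\times\R^{d})}\lesssim\|h_{0}\|_{\dot H^{-\gamma_{0}}}<\infty$. For $\mu=-1$, I would test $\partial_{t}w-\Delta w=-|h|^{4/d}h$ against $w=h-\Phi$, exploiting the defocusing sign to obtain
\begin{equation*}
\tfrac{1}{2}\tfrac{d}{dt}\|w\|_{L^{2}}^{2}+\|\nabla w\|_{L^{2}}^{2}+\|h\|_{L^{2+4/d}}^{2+4/d}=\int|h|^{4/d}h\,\Phi\,dx\le\tfrac{1}{2}\|h\|_{L^{2+4/d}}^{2+4/d}+C\|\Phi\|_{L^{2+4/d}}^{2+4/d}
\end{equation*}
after H\"older and Young. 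Integrating over $\R^{+}$ and invoking \eqref{Strichartz type} yields the uniform-in-$T$ a priori bound
\begin{equation*}
\|w\|_{L^{\infty}_{t}L^{2}_{x}(\R^{+})}^{2}+\|\nabla w\|_{L^{2}_{t,x}(\R^{+})}^{2}+\|h\|_{L^{2+4/d}_{t,x}(\R^{+})}^{2+4/d}\lesssim\|h_{0}\|_{\dot H^{-\gamma_{0}}}^{2+4/d}.
\end{equation*}

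\textbf{Focusing small-data case.} Here the favorable sign is lost, so I would instead apply the inhomogeneous heat-Strichartz estimate for the admissible pairs $(\infty,2)$ and $(2+4/d,2+4/d)$ to $w(t)=\int_{0}^{t}e^{(t-s)\Delta}|h|^{4/d}h\,ds$, yielding
\begin{equation*}
\|w\|_{L^{\infty}_{t}L^{2}_{x}\cap L^{2+4/d}_{t,x}}\lesssim\|h\|_{L^{2+4/d}_{t,x}}^{1+4/d}.
\end{equation*}
Combined with $\|h\|_{L^{2+4/d}_{t,x}}\le\|\Phi\|_{L^{2+4/d}_{t,x}}+\|w\|_{L^{2+4/d}_{t,x}}\lesssim\eta+\|w\|_{L^{2+4/d}_{t,x}}$ for $\eta:=\|h_{0}\|_{\dot H^{-\gamma_{0}}}$, a standard continuity/bootstrap argument on $[0,T]$ closes globally once $\eta$ is small enough, giving the same kind of uniform-in-$T$ control as in the defocusing case.

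\textbf{Continuation and main obstacle.} Given global finiteness of $\|h\|_{L^{2+4/d}_{t,x}(\R^{+})}$ and $\|w\|_{L^{\infty}_{t}L^{2}_{x}(\R^{+})}$, extending the local solution past any $T<\infty$ is routine: $h(T)=e^{T\Delta}h_{0}+w(T)$ decomposes as a smooth linear flow plus a uniformly $L^{2}$-bounded piece, and $\|\Phi\|_{L^{2+4/d}_{t,x}([T,T+\sigma])}\downarrow 0$ as $\sigma\downarrow 0$, so Theorem~\ref{thm:main1} re-applied at initial time $T$ produces an extension on $[T,T+\sigma]$, and iteration covers $\R^{+}$. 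The principal technical obstacle I foresee is the rigorous verification of the defocusing energy identity used above: $h$ is a priori only in $L^{2+4/d}_{t,x}$, not in $L^{2}_{x}$, so although every term is finite by H\"older, the pairing $\langle\partial_{t}w,w\rangle=\tfrac{1}{2}\tfrac{d}{dt}\|w\|_{L^{2}}^{2}$ must be justified. I would handle this by approximation: truncate and mollify $h_{0}$ into $L^{2}\cap L^{\infty}$, run the classical $L^{2}$-theory to obtain smooth solutions on which the identity holds verbatim, and pass to the limit using the uniqueness/stability built into Theorem~\ref{thm:main1} together with the continuous dependence of \eqref{Strichartz type} on $h_{0}\in\dot H^{-\gamma_{0}}$.
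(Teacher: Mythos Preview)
Your focusing small-data argument via bootstrap is essentially the paper's (already contained in Lemma~\ref{lem:local}). For the defocusing case your approach is correct but the paper takes a simpler route that sidesteps your principal obstacle entirely. Rather than test the $w$-equation against $w$ from $t=0$ (which, as you correctly anticipate, forces an approximation argument since $h\notin L^2_x$ initially), the paper observes that the linear piece smooths into $L^2$ at any positive time, $\|v_L(t)\|_{L^2}\lesssim t^{-\gamma_0/2}\|h_0\|_{\dot H^{-\gamma_0}}$, so that $h(\delta)=v_L(\delta)+w(\delta)\in L^2(\R^d)$. One then restarts the \emph{classical} $L^2$ theory from $t_0=\delta$: pairing the equation with $h$ itself (now legitimate, no approximation needed) gives
\[
\sup_{t\ge\delta}\|h(t)\|_{L^2}^2+\|\nabla h\|_{L^2_{tx}([\delta,\infty)\times\R^d)}^2\le\|h(\delta)\|_{L^2}^2,
\]
which bounds the critical norm $\|h\|_{L^{2(d+2)/d}_{tx}}$ by interpolation and triggers the blow-up criterion in Lemma~\ref{lem:local}. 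Your energy identity on $w$ buys a direct bound on $\|h\|_{L^{2+4/d}_{t,x}(\R^+)}$ uniformly from $t=0$, at the cost of the mollification/limit step; the paper's ``wait until $h$ enters $L^2$'' trick trades that uniformity near $t=0$ for a much shorter and cleaner argument.
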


Lastly, we are concerned with the asymptotic behavior of the solution. Based on the following linear flow estimate:
$$
\big\|e^{t\Delta}h_0\big\|_{L^2(\R^d)}\lesssim t^{-\frac{\gamma_0}{2}}\|h_0\|_{\dot H^{-\gamma_0}},\ \ t>0,
$$
we expect that the nonlinear flow obeys the same decay estimate. There is a body of works on the decay estimates for the parabolic systems. For example, for the heat equations \eqref{heat'}, it has an optimal universal $L^\infty$-decay estimate with rate $t^{-\frac d4},$ when the time is $t$ away from zero, see for example \cite{Galaktionov-Va-book};  for the Navier-Stokes equations, it was proved by Schonbek \cite{Schonbek1, Schonbek2} that when $u_0\in L^1(\R^3)\cap  L^2(\R^3)$, there is $L^2$-decay with rate of $t^{-\frac34}$. As the third main result of our paper, we show that under the hypotheses:
$$
h_0\in \dot H^{-\gamma_0}, \quad \mbox{ and }
\quad
\big\|h(t)\big\|_{L^{2+(4/d)}(\R^+\times\R^d)}< +\infty,
$$
(which are the essential conditions we need in the following proof),
the nonlinear solution has the same $L^2$-decay estimate as the linear flow. Our long-time decay result  is

\begin{thm} \label{thm:main3} Let $h(t)$ be the global solution obtained in Theorem \ref{thm:main2}. Then  the following pointwise decay estimate holds:
 \begin{equation}\label{decay}
\|h(t)\|_{L^2}\lesssim C(\|h_0\|_{\dot H^{-\gamma_0}})\ t^{-\frac{\gamma_0}{2}},  \ \ t>0,
\end{equation}
where $C\big(\|h_0\|_{\dot H^{-\gamma_0}}\big)=O\big(\|h_0\|_{\dot H^{-\gamma_0}}^\kappa \big)$ for some $\kappa>0$.
\end{thm}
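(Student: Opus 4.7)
The plan is to exploit the Duhamel formula $h(t)=e^{t\Delta}h_0+\mu\int_0^t e^{(t-s)\Delta}F(s)\,ds$, with $F=|h|^{4/d}h$, combined with heat-semigroup smoothing, the inhomogeneous Strichartz estimate, and a bootstrap on the weighted quantity
\[ N(T):=\sup_{0<t\le T}t^{\gamma_0/2}\|h(t)\|_{L^2}. \]
The linear contribution is immediate: a direct Fourier-multiplier computation, $\|e^{t\Delta}h_0\|_{L^2}\lesssim t^{-\gamma_0/2}\|h_0\|_{\dot H^{-\gamma_0}}$, already realises the target rate. For the nonlinear Duhamel term I would split the integral at $s=t/2$ into a far piece $I(t)=\int_0^{t/2}e^{(t-s)\Delta}F(s)\,ds$ (where $t-s\asymp t$) and a near piece $II(t)=\int_{t/2}^t e^{(t-s)\Delta}F(s)\,ds$ (where $s\asymp t$), and treat each separately.

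For $I(t)$ the heat-smoothing estimate gives $\|e^{(t-s)\Delta}F(s)\|_{L^2}\lesssim (t-s)^{-\sigma}\|F(s)\|_{L^p}$ with $\sigma=\frac{d}{2}(\frac{1}{p}-\frac{1}{2})$. I would pick $p$ so that $\|F(s)\|_{L^p}=\|h(s)\|_{L^r}^{1+4/d}$ with $r$ the interpolant of $L^2$ and $L^q$ (where $q=2+4/d$) at the specific parameter $\theta=d/(d+4)$; this precise choice makes $\theta(1+4/d)=1$ and $(1-\theta)(1+4/d)=4/d$, so
\[ \|F(s)\|_{L^p}\lesssim \|h(s)\|_{L^2}\,\|h(s)\|_{L^q}^{4/d}\le N(T)\,s^{-\gamma_0/2}\,\|h(s)\|_{L^q}^{4/d}. \]
Applying Hölder in $s$ with the exponent pair $\bigl(\tfrac{d+2}{d},\tfrac{d+2}{2}\bigr)$ against the globally finite $\|h\|_{L^q_{tx}(\R^+\times\R^d)}$ and collecting powers of $t$ yields, after exact cancellation of $\sigma$ against the Hölder and weight exponents,
\[ \|I(t)\|_{L^2}\lesssim N(T)\,\|h\|_{L^q_{tx}}^{4/d}\cdot t^{-\gamma_0/2}. \]
For $II(t)$ the inhomogeneous Strichartz estimate (dual of $L^2\to L^q_{tx}$) gives $\|II(t)\|_{L^2}\lesssim \|F\|_{L^{q'}_{tx}([t/2,t])}=\|h\|_{L^q_{tx}([t/2,t])}^{1+4/d}$; since $\|h\|_{L^q_{tx}(\R^+\times\R^d)}<\infty$, this tail is small for large $t$, and re-running Strichartz on $[t/2,t]$ starting from $h(t/2)\in L^2$ and absorbing the resulting small nonlinear term improves the bound to $\|h\|_{L^q_{tx}([t/2,t])}\lesssim \|h(t/2)\|_{L^2}\lesssim N(T)\,t^{-\gamma_0/2}$, whence $\|II(t)\|_{L^2}\lesssim N(T)^{1+4/d}t^{-\gamma_0/2}$.

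Summing the three contributions produces the self-improving inequality
\[ N(T)\le C\|h_0\|_{\dot H^{-\gamma_0}}+C\,N(T)\,\|h\|_{L^q_{tx}}^{4/d}+C\,N(T)^{1+4/d}, \]
uniform in $T$. In the focusing small-data regime both $\|h_0\|_{\dot H^{-\gamma_0}}$ and $\|h\|_{L^q_{tx}}$ are small by Theorem~\ref{thm:main2}, so a standard continuity/bootstrap argument closes the inequality and gives $N(T)=O\bigl(\|h_0\|_{\dot H^{-\gamma_0}}^\kappa\bigr)$ for some $\kappa>0$. In the defocusing case I would partition $\R^+$ into finitely many sub-intervals on which $\|h\|_{L^q_{tx}}$ is small enough to close the local bootstrap, and iterate to propagate the $L^2$-bound forward in time. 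The hardest part will be the exponent bookkeeping in the $I(t)$ estimate: the single magic choice $\theta=d/(d+4)$ must simultaneously satisfy the heat-smoothing Sobolev constraint, make the time Hölder integrable at $s=0$ (which demands $\gamma_0<\tfrac{2d}{d+2}$, a strict consequence of the standing hypothesis $\gamma_0<\tfrac{d-1}{d+2}$), and deliver exactly the rate $t^{-\gamma_0/2}$; a secondary difficulty is closing the defocusing bootstrap in a way that keeps the final constant of the required polynomial form $O(\|h_0\|_{\dot H^{-\gamma_0}}^\kappa)$.
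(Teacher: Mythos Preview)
Your Duhamel bootstrap is sound for the focusing small-data regime and closely parallels the paper's argument there (the paper uses $\|\nabla h\|_{L^2_{tx}}$ and Gagliardo--Nirenberg where you use $\|h\|_{L^q_{tx}}$, but the structure---split at $t/2$, smoothing on the far piece, Strichartz on the near piece, continuity---is the same).

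The defocusing large-data case, however, has a real gap. Your self-improving inequality
\[
N(T)\le C\|h_0\|_{\dot H^{-\gamma_0}}+C\,\|h\|_{L^q_{tx}}^{4/d}\,N(T)+C\,N(T)^{1+4/d}
\]
cannot be closed once the coefficient $C\|h\|_{L^q_{tx}}^{4/d}$ exceeds $1$, and your partition idea does not rescue it. If you restart Duhamel from $t=a_k$, the linear piece $e^{(t-a_k)\Delta}h(a_k)$ is only an $L^2$-contraction: you have no $\dot H^{-\gamma_0}$ control on $h(a_k)$, hence no decay measured from $t=0$. If instead you keep Duhamel from $0$ and partition only the integral defining $I(t)$, the H\"older step is scaling-sharp: summing $K$ pieces via H\"older on the index $k$ reproduces exactly the global factor $\|h\|_{L^q_{tx}}^{4/d}$, with no smallness gained (concretely, $\sum_k A_k^{d/(d+2)}B_k\le(\sum_k A_k)^{d/(d+2)}(\sum_k B_k^{(d+2)/2})^{2/(d+2)}$ returns the full spacetime norm).

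The paper's defocusing argument is built on an ingredient your scheme never uses: the sign. From $\tfrac{d}{dt}\|h\|_{L^2}^2+\|\nabla h\|_{L^2}^2\le 0$ one gets, after a frequency split at level $N(t)$,
\[
\tfrac{d}{dt}\|h\|_{L^2}^2+N(t)^2\|h\|_{L^2}^2\le N(t)^2\|P_{\le N(t)}h\|_{L^2}^2.
\]
The right side is estimated via Duhamel and Bernstein (the projector buys a factor $N(t)$), and choosing $N(t)^2=2\alpha t^{-1}$ with $\alpha$ \emph{small} closes a first bootstrap yielding a weak decay $\|h(t)\|_{L^2}\lesssim t^{-\alpha/2}$; here the needed smallness comes from $\alpha$, not from the data. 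That weak decay is then inserted back into the low-frequency estimate, producing an extra $t^{-\alpha}$ in front of the dangerous term, so that for large $t$ the full-rate bootstrap with $N(t)^2=2\gamma_0 t^{-1}$ closes. This two-pass mechanism (smallness from $\alpha$, then smallness from $t^{-\alpha}$) is the missing idea.
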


 It  is worth noting  that the decay estimate \eqref{decay} holds in the defocusing case with large initial data $h_0,$ and   the decay rate $\gamma_0$ is not necessarily  small as the condition presented in Theorem \ref{thm:main1}. Therefore, the standard bootstrap argument does not work directly in our case. Due  to the low regularity and without additional spatial decay of the initial data (like $u_0\in L^p, p<2$), the existing argument (e.g., \cite{Schonbek1, Schonbek2})does not seem to work in this situation. In this paper, some new idea is needed to overcome the crucial technical obstacles for large initial data in the defocusing case. In particular, a time-dependent frequency cut-off estimate of $P_{\le N(t)}h(t)$  is introduced to gain the smallness via two steps in different directions, where $P_{<N}$ is a Littewood-Paley projection of $L^{2}(\R^d)$. To clear our argument, we give the sketch of its proof.


\noindent {\bf Sketch of proof of Theorem \ref{thm:main3}: }

The proof is divided into the following three steps.

Step 1,  a time-dependent low frequency estimate. More precisely, we obtain that for any $N(t)>0$ and any $\alpha\in[0,\gamma_0)$,
\begin{align}\label{Low-Est}
\big\|P_{\le N(t)}h(t)\big\|_{L^2(\R^d)}
\le
t^{-\frac{\alpha}{2}}C\big(\|h_0\|_{\dot H^{-\gamma_0}}\big)\Big(1+N(t)t^{\frac12\alpha}+N(t)t^{\frac{1}2}\ \|h\|_{Y_\alpha(T)}\Big),
\end{align}
where
$\|h\|_{Y_\alpha(T)}=\sup\limits_{t\in[0,T]}\big(t^\frac{\alpha}{2}\|h(t)\|_{L^2(\R^d)}\big)$.
One may note that the estimates above obey the scaling invariance.

Step 2, a weak $L^2$-estimate. Applying the estimate \eqref{Low-Est}, we can prove that for some  $0<\alpha<<1$,
 \begin{equation}\label{decay-weak}
\|h(t)\|_{L^2}\lesssim C(\|h_0\|_{\dot H^{-\gamma_0}}) t^{-\frac{\alpha}{2}},  \ \ t>1.
\end{equation}
Indeed, we have the following modified energy estimate,
\begin{align*}
\frac{d}{dt}\|h(t)\|_{L^2}^2+N(t)^2\big\|h(t)\big\|_{L^2}^2\le N(t)^2\left\|P_{\le N(t)}h(t)\right\|_{L^2}^2.
\end{align*}
Then this combined with the estimate \eqref{Low-Est} and choosing the function $N(t)$ such that $N(t)^2=2\alpha t^{-1}$,  yields that
\begin{align*}
 \|h\|_{Y_\alpha(T)}^2
\le
&
C\big(\|h_0\|_{\dot H^{-\gamma_0}}\big)+2\alpha C\big(\|h_0\|_{\dot H^{-\gamma_0}}\big)\|h\|_{Y_\alpha(T)}^2.
\end{align*}
Thanks to the smallness from $\alpha$,  we obtain \eqref{decay-weak}.

Step 3, $L^2$-estimate with the decaying rate of $t^{-\frac{\gamma_0}{2}}$.   Based on the weak decay estimate above, we can improve  the estimate in Step 1 as
\begin{align}\label{Low-Est-improved}
\big\|P_{\le N(t)}h(t)\big\|_{L^2(\R^2)}
\le
t^{-\frac{\gamma_0}{2}}C\big(\|h_0\|_{\dot H^{-\gamma_0}}\big)\Big(1+N(t)t^{\frac12\gamma_0}+N(t)t^{\frac{1}2}\>t^{-\frac12\alpha} \|h\|_{Y_{\gamma_0}(T)}\Big).
\end{align}
It gains a  $t^{-\frac12\alpha}$ in  the front of the last term. In particular,  we can get a smallness from it when $t$ is large.  This  combined the mass estimate finally leads to the claimed optimal result by the bootstrap.

\subsection{The organization of the paper}Here we will describe the organization of  the paper and outline the key ideas of proofs.

In Section 2, we will list some preliminaries used in the paper,  including Littlewood-Paley multipliers and serval time-space estimates of linear flow $e^{t\Delta}$.

In Section 3, we are  devoted to giving the proof of Theorem \ref{thm:main1}.
Specifically, we decompose the initial data $h_0$ into the sum of $v_0$ and $w_0$ such that $v_0\in \dot H^{-\gamma_0}(\R^d)$ and $w_0\in L^2(\R^d)$, and then evolute the linear heat flow $v_L(t)$ for $v_0$ in the space $ \dot H^{-\gamma_0}(\R^d)$ and do the nonlinear heat flow $w(t)$ for $w_0$ in $L^2(\R^d)$, where the estimate \eqref{Strichartz type} is still indispensable in the present arguments. From the two flow we finally conclude the desired solution $h(t)$,

In Section 4, we will show Theorem \ref{thm:main2}. In focusing case ($\mu=1$), we  need the smallness assumption on  $\|h_0\|_{\dot H^{-\gamma_0}(\R^d)}$. But in the defocusing case ($\mu=-1$), the additional restriction the smallness can be removed.


\section{Preliminary}

\subsection{Littlewood-Paley multipliers and related inequalities}

Throughout this paper, we write $A\lesssim B$ to signify
that there exists a constant $c$ such that $A\leq cB$, while we denote $A\sim B$ when
$A\lesssim B\lesssim A$. We first define the {\it  Littlewood-Paley projection multiplier}. Let $\varphi(\xi)$ be a fixed real-valued radially symmetric bump function adapted to the ball $\{\xi\in \R^d: \ |\xi|\le 2\}$, which equals 1 on the ball $\{\xi\in \R^d: \ |\xi|\le 1\}$. Define a {\it dyadic number} to any number $N\in 2^{\mathbf{Z}}$ of the form $N=2^j$ where $j\in \mathbf{Z}$ ( the integer set). For each dyadic number $N$, we define the Fourier multipliers
$$\widehat{P_{\le N}f}(\xi):=\varphi(\xi/N)\hat{f}(\xi), \ \ \widehat{P_{N}f}(\xi):=\varphi(\xi/N)-\varphi(2\xi/N)\hat{f}(\xi), $$
where $\hat{f}$ denotes the Fourier transform of $f$.  Moreover, define $P_{>N}=I-P_{\le N}$ and  $P_{<N}=P_{\le N}-P_N$, etc. In particular, we have the telescoping expansion:
$$P_{\le N}=\sum_{M\le N} P_M f; \ \ P_{>N}=\sum_{M> N} P_M f$$
where $M$ ranges over dyadic numbers. It was well-known that the Littlewood-Paley operators satisfy the following useful {\it Bernstein inequalities} with $s>0$ and $1\le p\le q\le\infty$ ( see e.g. Tao \cite{tao} ):
$$\|P_{\ge N}f\|_{L^p_x(\R^d)}\lesssim N^{-s}\||\nabla|^s P_{\ge N}f\|_{L_x^p(\R^d)}, \ \||\nabla|^s P_{\le N}f\|_{L^p_x(\R^d)}\lesssim N^{s}\|| P_{\le N}f\|_{L_x^p(\R^d)};$$
$$ \||\nabla|^{\pm s}  P_{ N}f\|_{L^p_x(\R^d)}\sim N^{\pm s}\|| P_{ N}f\|_{L_x^p(\R^d)};$$
$$\|P_{N}f\|_{L^q_x(\R^d)}\lesssim N^{(\frac{d}{p}-\frac{d}{q})}\| f\|_{L_x^p(\R^d)}, \ \|P_{\le N}f\|_{L^q_x(\R^d)}\lesssim N^{(\frac{d}{p}-\frac{d}{q})}\|f\|_{L_x^p(\R^d)};$$

Moreover, we also have the following {\it mismatch estimate}, see e.g. \cite{LiZh-APDE}.
\begin{lem}[Mismatch estimates]\label{lem:mismatch}
Let $\phi_1$ and $\phi_2$ be smooth functions obeying
$$
|\phi_j| \leq 1 \quad \mbox{ and }\quad \mbox{dist}(\emph{supp}
\phi_1,\, \emph{supp} \phi_2 ) \geq A,
$$
for some large constant $A$.  Then for $m>0$, $N\ge 1$ and $1\leq p\leq
q\leq \infty$,
\begin{align*}
\bigl\| \phi_1  P_{\le  N} (\phi_2 f)
\bigr\|_{L^q_x(\R^d)}=\bigl\| \phi_1  P_{\ge N} (\phi_2 f)
\bigr\|_{L^q_x(\R^d)}
    \lesssim_m A^{-m+\frac dq-\frac dp}N^{-m} \|\phi_2 f\|_{L^p_x(\R^d)}.
\end{align*}
\end{lem}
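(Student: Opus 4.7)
The plan is to reduce the inequality to a pointwise kernel bound on the low-frequency projector and then close the argument by Young's convolution inequality. First, the equality of the two norms is essentially free: since $\mbox{dist}(\mbox{supp}\,\phi_1,\mbox{supp}\,\phi_2)\geq A>0$, the product $\phi_1\phi_2 f$ vanishes identically, and the identity $P_{\leq N}+P_{>N}=I$ then forces $\phi_1 P_{\leq N}(\phi_2 f)=-\phi_1 P_{>N}(\phi_2 f)$. So only the low-frequency piece needs to be estimated.

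To bound it, let $K$ be the Schwartz function with $\widehat{K}=\varphi$, so that $P_{\leq N}$ is convolution with the kernel $K_N(z):=N^d K(Nz)$. Writing
\[
\phi_1(x)\,P_{\leq N}(\phi_2 f)(x)=\phi_1(x)\int_{\R^d} K_N(x-y)\,\phi_2(y)f(y)\,dy,
\]
I observe that whenever $\phi_1(x)\phi_2(y)\neq 0$ one has $|x-y|\geq A$. Using the rapid decay $|K(z)|\lesssim_M (1+|z|)^{-M}$ valid for every $M\geq 0$, this produces, for $|z|\geq A$ and $N\geq 1$, the pointwise bound $|K_N(z)|\lesssim_M N^{d-M}|z|^{-M}$.

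I would then apply Young's convolution inequality with exponent $r$ determined by $\tfrac{1}{r}=1+\tfrac{1}{q}-\tfrac{1}{p}\in[0,1]$ (valid precisely because $p\leq q$) to get
\[
\|\phi_1 P_{\leq N}(\phi_2 f)\|_{L^q_x}\leq \|K_N\mathbf{1}_{\{|z|\geq A\}}\|_{L^r_x}\,\|\phi_2 f\|_{L^p_x}.
\]
Evaluating the radial tail integral $\int_{|z|\geq A}|z|^{-Mr}\,dz\sim A^{d-Mr}$ and choosing $M=m+d$ (one can always take $M$ as large as one likes, at the cost of enlarging the implicit constant) gives $\|K_N\mathbf{1}_{\{|z|\geq A\}}\|_{L^r_x}\lesssim N^{-m}A^{-m+d/q-d/p}$, which is exactly the required bound. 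The calculation is essentially mechanical; the one thing to monitor is taking $M$ large relative to $r$ so that the tail integral converges, but since $m$ (and hence $M$) is arbitrary this never causes trouble. I do not anticipate any serious obstacle — the real content of the lemma is the observation that the disjoint-support hypothesis converts the standard decay of the Littlewood--Paley kernel into a genuinely small factor $A^{-m}$, and Young's inequality packages this into the stated mixed-norm form.
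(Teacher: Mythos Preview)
Your proof is correct. The paper does not supply its own proof of this lemma --- it merely cites Li--Zhang \cite{LiZh-APDE} --- so there is nothing to compare against, but your argument (kernel decay of the Littlewood--Paley projector combined with the disjoint-support hypothesis, then Young's inequality with $M=m+d$) is the standard one and all the exponents check out. One cosmetic remark: the statement has $P_{\ge N}$ rather than $P_{>N}$, so strictly speaking $P_{\le N}+P_{\ge N}\ne I$; but since $\phi_1 P_{<N}(\phi_2 f)=-\phi_1 P_{\ge N}(\phi_2 f)$ and $P_{<N}$ has a Schwartz convolution kernel of the same type, the same bound applies, and in any case the content of the lemma is the inequality, not the exact equality of norms.
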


\subsection{Space-time estimates of the linear heat equation}Let $e^{t\Delta}$ denote the heat semigroup on $\mathbb{R}^d$. Then for suitable function $f$,  $e^{t\Delta}f$ solves the linear heat equation
$$
\partial_t h=\Delta h, \ \ h(0,x)=f(x),\ t>0, \ x\in\R^d,
$$
and the solution satisfies the following fundamental space-time estimates:
\begin{lem}\label{lem:Stri} Let $f\in L^p(\R^d)$ for $1\le p\le \infty$, then
\begin{align}\label{lem:Lp-bound}
\big\|e^{t\Delta}f\big\|_{L^\infty_tL^p_x(\R^+\times\R^d)}\lesssim \|f\|_{L^p(\R^d)}.
\end{align}
Moreover, let $I\subset \R^+$, then for $f\in L^2(\R^d)$ and $F\in L^\frac{2(2+d)}{d+4}_{tx}(\R^+\times\R^d)$,
\begin{align}
\big\|\nabla e^{t\Delta}f\big\|_{L^2_{tx}(\R^+\times\R^d)}&\lesssim \|f\|_{L^2(\R^d)};\label{nabla-L2}
\end{align}
\begin{align}\label{nabla-L2'}
\big\| e^{t\Delta}f\big\|_{L^{\frac{2(2+d)}{d}}_{tx}(\R^+\times\R^d)}&\lesssim \|f\|_{L^2(\R^d)};
\end{align}
\begin{align}\label{Lpq}
\Big\|\int_0^t e^{(t-s)\Delta}F(s)ds\Big\|_{L^\infty_tL^2_x\ \cap \ L^\frac{2(2+d)}{d}_{tx}\cap\ L^2_t\dot H^1_x(I\times\R^d)}\lesssim \|F\|_{L^\frac{2(2+d)}{d+4}_{tx}(\R^+\times\R^d)}.
\end{align}
\end{lem}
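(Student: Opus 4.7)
The plan is to handle the four inequalities sequentially, working from the heat kernel $G_t(x)=(4\pi t)^{-d/2}e^{-|x|^2/(4t)}$ and employing three classical tools: Young's convolution inequality, the energy identity, and a $TT^*$ argument closed by the Hardy-Littlewood-Sobolev inequality.

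For \eqref{lem:Lp-bound}, since $e^{t\Delta}f = G_t\ast f$ and $\|G_t\|_{L^1_x(\R^d)}=1$ for every $t>0$, Young's inequality immediately gives $\|e^{t\Delta}f\|_{L^p_x}\le\|f\|_{L^p}$ uniformly in $t$, hence the $L^\infty_tL^p_x$ bound. For \eqref{nabla-L2}, testing $\partial_th = \Delta h$ against $h$ and integrating by parts in $x$ yields $\tfrac{1}{2}\tfrac{d}{dt}\|h(t)\|_{L^2}^2 = -\|\nabla h(t)\|_{L^2}^2$; integrating over $t\in(0,\infty)$ produces $\int_0^\infty\|\nabla e^{t\Delta}f\|_{L^2}^2\,dt = \tfrac{1}{2}\|f\|_{L^2}^2$, as required.

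The technical heart of the lemma is \eqref{nabla-L2'}. Setting $q = 2(d+2)/d$ and $q' = 2(d+2)/(d+4)$, I would argue by $TT^*$: the boundedness of $Tf(t,x):=e^{t\Delta}f(x)$ from $L^2_x$ into $L^q_{tx}$ is equivalent to the boundedness of
\[
TT^*G(t,x) = \int_0^\infty \bigl(e^{(t+s)\Delta}G(s,\cdot)\bigr)(x)\,ds
\]
from $L^{q'}_{tx}$ into $L^q_{tx}$. Young's inequality in $x$ with the exponent relation $1/q+1 = 1/r+1/q'$ forces $r=(d+2)/d$, and the Gaussian bound yields $\|G_{t+s}\|_{L^r_x}\lesssim (t+s)^{-d/(d+2)}$. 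The problem thereby reduces to the one-dimensional convolution estimate
\[
\Bigl\|\int_0^\infty (t+s)^{-\tfrac{d}{d+2}}\,g(s)\,ds\Bigr\|_{L^q_t(\R^+)}\lesssim \|g\|_{L^{q'}_t(\R^+)},
\]
and since $(t+s)^{-d/(d+2)}\le |t-s|^{-d/(d+2)}$ for $s,t>0$, this follows from the classical Hardy-Littlewood-Sobolev inequality, whose exponents match exactly because $1+1/q-1/q' = d/(d+2)$. Identifying and checking these matching exponents is the only delicate point; the rest is bookkeeping.

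Finally, for \eqref{Lpq}, the $L^q_{tx}$ component of the norm is obtained by the same Minkowski/Young/HLS chain applied to the causal kernel $(t-s)^{-d/(d+2)}\mathbf{1}_{s<t}$ in place of $(t+s)^{-d/(d+2)}$, the HLS step being unchanged. For the $L^\infty_tL^2_x$ and $L^2_t\dot H^1_x$ components, I would exploit that $h(t):=\int_0^t e^{(t-s)\Delta}F(s)\,ds$ solves $\partial_th-\Delta h = F$ with $h(0)=0$; testing against $h$ and integrating by parts produces the energy identity
\[
\tfrac{1}{2}\|h(T)\|_{L^2}^2 + \int_0^T\|\nabla h(t)\|_{L^2}^2\,dt = \int_0^T\!\!\int_{\R^d} F\,h\,dx\,dt,
\]
whose right-hand side is at most $\|F\|_{L^{q'}_{tx}}\|h\|_{L^q_{tx}}$ by H\"older. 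Inserting the $L^q_{tx}$ bound on $h$ just established and passing to $\sup_T$ closes both the $L^\infty_tL^2_x$ and $L^2_t\dot H^1_x$ pieces simultaneously.
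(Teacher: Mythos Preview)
Your argument is correct. The treatments of \eqref{lem:Lp-bound} and \eqref{nabla-L2} are essentially the same as the paper's (Young's inequality for the first; for the second the paper phrases it as a square-function estimate via Plancherel, but your energy-identity computation is equivalent).

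For \eqref{nabla-L2'} and \eqref{Lpq}, however, you take a genuinely different route. The paper obtains \eqref{nabla-L2'} by Gagliardo--Nirenberg interpolation in space followed by H\"older in time, writing
\[
\|e^{t\Delta}f\|_{L^{\frac{2(d+2)}{d}}_{tx}}\lesssim \|e^{t\Delta}f\|_{L^\infty_tL^2_x}^{\frac{2}{d+2}}\|\nabla e^{t\Delta}f\|_{L^2_{tx}}^{\frac{d}{d+2}},
\]
and then appeals to \eqref{lem:Lp-bound} and \eqref{nabla-L2}. For \eqref{Lpq} the paper's logic is reversed from yours: it first proves the $L^\infty_tL^2_x$ bound by a direct Fourier-side calculation (the $p=\infty$ case of a separate lemma proved via Young's inequality in $t$ on $e^{-(t-s)|\xi|^2}$), argues the $L^2_t\dot H^1_x$ bound analogously, and then recovers the $L^{\frac{2(d+2)}{d}}_{tx}$ bound by the same interpolation. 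Your approach instead establishes the mixed $L^q_{tx}$ bound first, via $TT^*$ and Hardy--Littlewood--Sobolev, and then feeds it into the energy identity to capture the remaining two norms simultaneously. Your argument is more in the spirit of dispersive Strichartz estimates and is self-contained (no auxiliary lemma needed); the paper's interpolation approach avoids HLS entirely but requires the separate Fourier computation for the endpoint inhomogeneous bound. Both are clean; yours has the mild advantage that the energy step yields $L^\infty_tL^2_x$ and $L^2_t\dot H^1_x$ in one stroke.
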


We can give some remarks on the inequalities $\eqref{lem:Lp-bound}-\eqref{Lpq}$ above as follows:

 (i). The estimate \eqref{lem:Lp-bound} is classical and  immediately follows from the Young inequality by the following heat kernel integral:
$$(e^{t\Delta}f)(x)=( 4\pi t)^{-d/2}\int_{\R^d}e^{-|x-y|^2/4t}f(y)dy, \ t>0.$$
More generally, for all $1\le p\le q\le\infty$, the following (decay) estimate hold:
\begin{align}
\|e^{t\Delta}f\|_{L^q(\R^d)}\lesssim t^{\frac{d}{2}(\frac{1}{q}-\frac{1}{p})} \|f\|_{L^p(\R^d)}, \  \ t>0.\label{Linear-decay}
\end{align}

 (ii). The estimate \eqref{nabla-L2} is equivalent to  a kind of square-function inequality on $L^2(\R^d)$, which can be reformulated as
$$\Big\| \Big( \int_0^\infty |\sqrt{t}\nabla e^{t\Delta}f|^2\frac{dt}{t}\Big)^{\frac{1}{2}}\Big\|_{L^2(\R^d)}\lesssim \|f\|_{L^2(\R^d)},$$
which follows directly by the Plancherel theorem, and also holds in the $L^p(\R^d)$ for $1<p<\infty$ ( see e.g. Stein\cite[p. 27-46]{Stein} ).

(iii). The estimate \eqref{nabla-L2'} can be obtained by interpolation between the \eqref{lem:Lp-bound} and \eqref{nabla-L2}:
$$
\big\|e^{t\Delta}f\big\|_{L^\frac{2(2+d)}{d}_{tx}(\R^+\times\R^d)}\lesssim \big\|e^{t\Delta}f\big\|_{L^\infty_tL^2_x(\R^+\times\R^d)}^{\frac 2{d+2}}
\big\|\nabla e^{t\Delta}f\big\|_{L^2_{tx}(\R^+\times\R^d)}^{\frac d{d+2}}.
$$

(iv). The estimate \eqref{Lpq} consists of the three same type inequalities with the different norms $L^\infty_tL^2_x$, $ L^\frac{2(2+d)}{d}_{tx}$ and  $L^2_t\dot H^1_x$ on the left side. As shown in (iii) above, the second norm $ L^\frac{2(2+d)}{d}_{tx}$ can be controlled by interpolation between $L^\infty_tL^2_x$ and $L^2_t\dot H^1_x$. Due to the similarity of their  proofs, we  will give a  proof to the first one, which in turn is the special case of the following lemma.
 It is worth noting that the estimate is $L^2$-subcritical for $p<\infty$.
\begin{lem}\label{lem:Stri-1} Let $2\le p\le \infty$, and the pair $(p_1,r_1)$ satisfy
$$
\frac2{p_1}+\frac d{r_1}=\frac d2+2+\frac2p,\quad 1\le p_1\le 2,\quad 1<r_1\le2,
$$
then
$$
\Big\|\int_0^t e^{(t-s)\Delta}F(s)ds\Big\|_{L^p_tL^2_x(\R^+\times\R^d)}\lesssim \|F\|_{L_t^{p_1}L_x^{r_1}(\R^+\times\R^d)}.
$$
\end{lem}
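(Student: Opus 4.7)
The plan is to reduce the mixed-norm estimate to a one-dimensional convolution-in-time inequality by combining Minkowski's inequality with the pointwise heat-kernel decay \eqref{Linear-decay}. Set $\sigma:=\tfrac{d}{2}\bigl(\tfrac{1}{r_1}-\tfrac{1}{2}\bigr)$. A short computation shows that the scaling hypothesis $\tfrac{2}{p_1}+\tfrac{d}{r_1}=\tfrac{d}{2}+2+\tfrac{2}{p}$ is equivalent to the Hardy--Littlewood--Sobolev (HLS) relation $\tfrac{1}{p_1}=\tfrac{1}{p}+1-\sigma$, and from $1<r_1\le 2$ one sees that $\sigma\in[0,1)$ throughout the admissible range.

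First I would apply \eqref{Linear-decay} with the exponent pair $(r_1,2)$ to obtain
\begin{equation*}
\bigl\|e^{(t-s)\Delta}F(s)\bigr\|_{L^2_x}\lesssim (t-s)^{-\sigma}\|F(s)\|_{L^{r_1}_x},
\end{equation*}
and then use Minkowski's integral inequality to pull $\|\cdot\|_{L^2_x}$ inside the $ds$-integral. The lemma then reduces to the scalar convolution bound
\begin{equation*}
\Big\|\int_0^t (t-s)^{-\sigma}\,g(s)\,ds\Big\|_{L^p_t(\R^+)}\lesssim \|g\|_{L^{p_1}_t(\R^+)},\qquad g(s):=\|F(s)\|_{L^{r_1}_x}.
\end{equation*}
Since $|t|^{-\sigma}\mathbf{1}_{t>0}\in L^{1/\sigma,\infty}(\R)$, for $2\le p<\infty$ with $1<p_1\le 2$ this is exactly the one-dimensional HLS inequality at the scaling index, and is immediate.

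The main obstacle is the endpoint $p=\infty$, which forces $\tfrac{1}{p_1}=1-\sigma$ and falls outside the validity of HLS on the line. I would handle it by a $TT^\ast$-type argument: for $u(T):=\int_0^T e^{(T-s)\Delta}F(s)\,ds$,
\begin{equation*}
\|u(T)\|_{L^2_x}^2=\int_0^T\!\!\int_0^T \bigl\langle F(s),\,e^{(2T-s-s')\Delta}F(s')\bigr\rangle\,ds\,ds'.
\end{equation*}
Pairing the $L^{r_1}_x$--$L^{r_1'}_x$ duality with \eqref{Linear-decay} yields $|\langle F(s),e^{\tau\Delta}F(s')\rangle|\lesssim \tau^{-2\sigma}g(s)g(s')$, so the double integral is dominated by $\iint_{(0,T)^2}(2T-s-s')^{-2\sigma}g(s)g(s')\,ds\,ds'$. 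Since $2\sigma\in(0,1)$ and $\tfrac{2}{p_1}+2\sigma=2$, this is controlled by $\|g\|_{L^{p_1}}^2$ via the one-dimensional bilinear Stein--Weiss (equivalently HLS) inequality, uniformly in $T$. The remaining degenerate case $\sigma=0$ (that is, $r_1=2$, $p_1=1$, $p=\infty$) is not an HLS instance at all and follows trivially from Minkowski together with the $L^2_x$-contractivity of $e^{t\Delta}$.
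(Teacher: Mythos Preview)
Your physical-space reduction to HLS in time is a genuinely different route from the paper's. The paper passes to the Fourier side via Plancherel and bounds the inner convolution in $t$, at each fixed frequency $\xi$, by Young's inequality: since $\|e^{-t|\xi|^2}\|_{L^q_t(\R^+)}=c\,|\xi|^{-2/q}$ with $1+\tfrac1p=\tfrac1{p_1}+\tfrac1q$, one obtains
\[
\Big\|\int_0^t e^{-(t-s)|\xi|^2}\widehat F(s,\xi)\,ds\Big\|_{L^p_t}\lesssim|\xi|^{-2/q}\,\|\widehat F(\cdot,\xi)\|_{L^{p_1}_t},
\]
after which Minkowski (using $p_1\le2\le p$), Plancherel and the Sobolev embedding $\dot H^{-2/q}\hookleftarrow L^{r_1}$ finish. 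This argument is uniform over the full parameter range because Young's inequality works for every $1\le p_1\le p\le\infty$.

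Your approach, by contrast, leaks at two endpoints. First, the claim that $1<r_1\le2$ forces $\sigma\in[0,1)$ is false for $d\ge5$: the $r_1$ constraint alone only gives $\sigma<\tfrac d4$, and one must invoke the relation $\sigma=\tfrac1p+1-\tfrac1{p_1}$ to conclude $\sigma\le1$ --- with equality exactly at $p=p_1=2$, which is admissible whenever $d>4$ (then $r_1=\tfrac{2d}{d+4}\in(1,2)$). At $\sigma=1$ your scalar bound collapses, since convolution with $t^{-1}\mathbf 1_{t>0}$ does not map $L^2(\R)\to L^2(\R)$. Second, HLS also fails at $p_1=1$, so the slice $p_1=1$, $2\le p<\infty$ is left uncovered; your trivial treatment of $\sigma=0$ only handles $p_1=1$ paired with $p=\infty$. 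The common reason both endpoints escape is that trading $e^{(t-s)\Delta}$ for its $L^{r_1}\!\to\! L^2$ operator norm $(t-s)^{-\sigma}$ discards integrability: the Fourier-side kernel $e^{-t|\xi|^2}$ lies in every $L^q_t(\R^+)$, whereas $t^{-\sigma}\mathbf 1_{t>0}$ is only in weak $L^{1/\sigma}$.
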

\begin{proof}
By Plancherel's theorem, it is equivalent that
 \begin{align}\label{lpq'}\Big\|\int_0^t e^{-(t-s)|\xi|^2}\widehat{F}(\xi, s)ds \Big\|_{L^p_tL^2_\xi(\R^+\times\R^d)}\lesssim \|F\|_{L_t^{p_1}L_x^{r_1}(\R^+\times\R^d)}.
 \end{align}
Since  by the Young inequality of the convolution on $\R^+$, for any $1\le p_1\le p\le \infty$,
$$\Big\|\int_0^t e^{-(t-s)|\xi|^2}\widehat{F}(\xi, s)ds\Big\|_{L^p(\R^+)}\lesssim \Big\||\xi|^{-(\frac2p+\frac2{p_1'})}\widehat{F}(\xi,\cdot)\Big\|_{L_t^{p_1}(\R^+)}.$$
Note that $p_1\le 2\le p$, thus by Minkowski's inequality,  Plancherel's theorem, Sobolev's embedding, we obtain
 \begin{align*}
 \Big\|\int_0^t e^{-(t-s)|\xi|^2}\widehat{F}(\xi, s)ds \Big\|_{L^p_tL^2_\xi(\R^+\times\R^d)}&\lesssim \Big\||\xi|^{-(\frac2p+\frac2{p_1'})}\widehat{F}(\xi,\cdot)\Big\|_{L^2_\xi L_t^{p_1}(\R^+\times\R^d)}\\ &\lesssim \Big\||\nabla|^{-(\frac2p+\frac2{p_1'})}F\Big\|_{L_t^{p_1}L^2_x(\R^+\times\R^d)}\lesssim\|F\|_{L_t^{p_1}L_x^{r_1}(\R^+\times\R^d)}.
\end{align*}
which gives the desired estimate \eqref{lpq'}.
\end{proof}

Finally, we also need the following maximal $L^p$-regularity result for the heat flow. See \cite{LR-Book} for example.

\begin{lem}\label{lem:max-Lp} Let $p\in (1,\infty),q\in (1,\infty)$, and let $T\in (0,\infty]$, then the operator $A$ defined by $$f(t,x)\mapsto \int_0^t e^{(t-s)\Delta}\Delta f(s,\cdot)\,ds$$ is bounded from $L^p((0,T),L^q(\R^d))$ to $L^p((0,T),L^q(\R^d))$.
\end{lem}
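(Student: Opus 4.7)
The statement is the classical maximal $L^p$–$L^q$ regularity for the heat semigroup on $\R^d$. Since $\Delta$ commutes with $e^{s\Delta}$, I would first rewrite
$$Af(t,x) \;=\; \int_0^t \Delta\, e^{(t-s)\Delta} f(s,\cdot)\,ds \;=\; \Delta u(t,x),$$
where $u$ is the mild solution of $u_t - \Delta u = f$ with $u(0)=0$. Thus the claim is equivalent to the maximal regularity inequality
$$\|\Delta u\|_{L^p_t L^q_x((0,T)\times\R^d)} \;\lesssim\; \|f\|_{L^p_t L^q_x((0,T)\times\R^d)},$$
uniformly in $T\in(0,\infty]$.

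As a first step, I would extend $f$ by zero outside $(0,T)$; since $A$ is causal this does not affect its values on $(0,T)$, and on $\R\times\R^d$ the operator becomes a Fourier multiplier in $(t,x)$ with symbol
$$m(\tau,\xi) \;=\; \frac{-|\xi|^2}{i\tau+|\xi|^2},$$
which is bounded by $1$ and invariant under the parabolic scaling $(\tau,\xi)\mapsto(\lambda^2\tau,\lambda\xi)$. Plancherel then immediately yields the $L^2(\R\times\R^d)$ bound. To upgrade this to a general $L^p_t L^q_x$ bound I see two standard routes. Route (i): realize $A$ as a parabolic singular integral with kernel $K(t,x)=\mathbf{1}_{t>0}\,\Delta_x G_t(x)$, where $G_t$ is the Gaussian; a direct computation gives $|K(t,x)|\lesssim (|x|^2+t)^{-(d+2)/2}$ together with matching first derivative bounds, so that $K$ is a Calderón–Zygmund kernel with respect to the parabolic metric $|t-s|^{1/2}+|x-y|$, and the Benedek–Calderón–Panzone vector-valued Calderón–Zygmund theorem produces the $L^p_t L^q_x$ estimate from the $L^2$ bound. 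Route (ii): apply Weis' theorem, which on the UMD space $L^q(\R^d)$ reduces maximal $L^p$-regularity to $R$-boundedness of $\{\lambda(\lambda-\Delta)^{-1}:\lambda\in i\R\setminus\{0\}\}$; for the Laplacian on $\R^d$ this follows from the Mikhlin multiplier theorem applied uniformly in $\lambda$.

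The main obstacle in either route is precisely this passage from $L^2(\R\times\R^d)$ to general $L^p_t L^q_x$: verifying the parabolic Calderón–Zygmund conditions on $\Delta_x G_t$ is routine but technically delicate because of the mixed time–space smoothness of the kernel, while the $R$-boundedness route requires the UMD property of $L^q$ and Khintchine-type inequalities. Since the lemma is well-documented in the literature (Lemarié-Rieusset \cite{LR-Book}, and earlier Hieber–Prüss and Weis), in the paper I would simply quote the result from \cite{LR-Book} rather than reproduce either proof in full.
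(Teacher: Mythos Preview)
Your proposal is correct, and in fact it matches the paper exactly: the paper does not give a proof of this lemma at all but simply states it and refers to \cite{LR-Book}. Your final sentence (quote the result rather than reproduce the proof) is precisely what the paper does; the sketch you provide beforehand is accurate extra detail that the paper omits entirely.
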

\section{Proof of Theorem \ref{thm:main1}}

In this section, we will divide several subsections to finish the proof of  Theorem \ref{thm:main1}.
\subsection{A supercritical estimate of $e^{t\Delta}$} Let us begin with the following radial Sobolev embedding estimates, see e.g. \cite{TaViZh}.
\begin{lem}\label{lem:radial-Sob}
Let $\alpha,q,p,s$ be the parameters which satisfy
$$
\alpha>-\frac dq;\quad \frac1q\le \frac1p\le \frac1q+s;\quad 1\le p,q\le \infty; \quad 0<s<d
$$
with
$$
\alpha+s=d(\frac1p-\frac1q).
$$
Moreover, let at most one of the following equalities hold:
$$
p=1,\quad p=\infty,\quad q=1,\quad q=\infty,\quad \frac1p=\frac1q+s.
$$
Then the radial Sobolev embedding inequality holds:
\begin{align*}
\big\||x|^\alpha u\big\|_{L^q(\R^d)}\lesssim \big\||\nabla|^su\big\|_{L^p(\R^d)}.
\end{align*}

\end{lem}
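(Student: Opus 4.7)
The plan is to reduce the weighted radial Sobolev embedding to a one-dimensional weighted integral inequality, and then to deduce that inequality from a classical Schur's-test / weighted Hardy--Littlewood--Sobolev argument. First, by density I may assume that $u \in C_c^\infty(\R^d)$ is radial. Setting $f := |\nabla|^s u \in L^p(\R^d)$, the Riesz representation $u = c_{d,s}\, I_s f$ (with $I_s$ the Riesz potential of order $s$) reduces the target inequality to
\[
\big\||x|^\alpha I_s f\big\|_{L^q(\R^d)} \lesssim \|f\|_{L^p(\R^d)}\quad\text{for radial } f.
\]
The scaling identity $\alpha + s = d(\tfrac{1}{p} - \tfrac{1}{q})$ is precisely the condition that makes both sides transform identically under $f \mapsto f(\lambda \cdot)$; it is a necessary consistency constraint but does not by itself reduce the problem.

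Next, I pass to polar coordinates: writing $f(y) = g(|y|)$ gives the radial representation
\[
(I_s f)(r e_1) = \int_0^\infty K_{d,s}(r,\rho)\, g(\rho)\, \rho^{d-1}\, d\rho, \qquad K_{d,s}(r,\rho) := c_{d,s} \int_{S^{d-1}} |r e_1 - \rho\omega|^{s-d}\, d\sigma(\omega).
\]
The kernel $K_{d,s}$ is jointly homogeneous of degree $s-d$ and positive. A routine sphere computation yields $K_{d,s}(r,\rho) \lesssim \max(r,\rho)^{s-d}$ when $r \not\sim \rho$, while on the diagonal $r \sim \rho$ one gets $K_{d,s}(r,\rho) \lesssim r^{s-d}\, h(|r-\rho|/r)$ for an integrable radial singularity $h$ (whose order is $s-1$ when $s<1$). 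Writing $\||x|^\alpha I_s f\|_{L^q}^q = |S^{d-1}| \int_0^\infty |(I_s f)(r)|^q\, r^{\alpha q + d - 1}\, dr$ and similarly for the right-hand side, the lemma is reduced to a weighted one-dimensional estimate for the integral operator with kernel $K_{d,s}$.

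Finally, I would apply Schur's test with power weights $r^\beta$, $\rho^{\beta'}$ (or, in endpoint situations, the 1D Stein--Weiss weighted HLS inequality) to close the estimate. The scaling identity fixes the exponent balance; the condition $\alpha > -d/q$ is exactly what makes the measure $r^{\alpha q + d - 1}\, dr$ integrable near $r = 0$ against compactly supported radial test functions; and $\tfrac{1}{q}\le \tfrac{1}{p}\le \tfrac{1}{q}+s$ together with the restriction that at most one of the five listed endpoint equalities is saturated provides enough slack to choose admissible Schur exponents $\beta,\beta'$.

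The main obstacle will be the diagonal contribution $r\sim\rho$: away from the diagonal the clean power bound $K_{d,s}\lesssim\max(r,\rho)^{s-d}$ makes Schur's test immediate, but near $r=\rho$ the spherical integral has an integrable yet unbounded singularity that must first be recast as a one-dimensional fractional integration operator before the weighted HLS argument can be applied. Checking each of the endpoint side-conditions where one of the forbidden equalities is approached will require a small case analysis, but no ideas beyond those above.
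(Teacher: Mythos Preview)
The paper does not actually prove this lemma: it is stated with the attribution ``see e.g.\ \cite{TaViZh}'' and no argument is given. So there is nothing to compare your proposal against in the paper itself.

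That said, your approach is sound and is essentially one of the standard routes to the radial Stein--Weiss inequality (which is what this lemma is). Reducing $\||x|^\alpha I_s f\|_{L^q}\lesssim\|f\|_{L^p}$ for radial $f$ to a one-dimensional weighted kernel estimate via the spherical average $K_{d,s}(r,\rho)$, then splitting into off-diagonal ($K_{d,s}\lesssim\max(r,\rho)^{s-d}$, handled by Schur with power weights) and diagonal ($r\sim\rho$, handled as a one-dimensional fractional integral) is exactly how the result is proved in the literature; see for instance Rubin or De~N\'apoli--Drelichman--Dur\'an. The conditions you identify play precisely the roles you describe: $\alpha>-d/q$ for local integrability of the weight, the scaling relation for homogeneity, and the non-endpoint hypothesis for the existence of admissible Schur exponents. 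Your diagonal analysis is also correct: for $s<1$ the spherical integral produces a singularity $\sim r^{1-d}|r-\rho|^{s-1}$, which after the change of variables becomes a standard fractional integration on $(0,\infty)$. The case analysis at the endpoints is mildly tedious but contains no surprises, so your outline would close without new ideas.
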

\begin{lem}\label{prop:superest} For any $q>2$ and any $\delta\in \big(\frac12-\frac3q,1-\frac4q\big)$, suppose that the radial function $f\in H^\gamma(\R^d)$ satisfying
$$
\mbox{supp }f\subset \{x:|x|\ge1\},
$$
then
$$
\big\|e^{t\Delta}f\big\|_{L^q_{tx}(\R^+\times\R^d)}\lesssim \big\||\nabla|^\delta f\big\|_{L^2_{x}(\R^d)}.
$$
\end{lem}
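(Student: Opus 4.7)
The key observation is that the support condition $\mathrm{supp}\,f\subset \{|x|\ge 1\}$ together with radial symmetry allows one to gain a factor $|x|^{\alpha}$ with $\alpha>0$ via Lemma \ref{lem:radial-Sob}, and this gain reproduces exactly the ``supercritical'' index $\alpha=\delta_c-\delta$, where $\delta_c=d/2-(d+2)/q$ is the scale-critical regularity for the $L^q_{tx}$ estimate on $e^{t\Delta}f$. The plan is to combine this radial gain with the heat Strichartz-type bounds of Lemma \ref{lem:Stri} and a spatial decomposition that isolates the support mismatch.

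I would begin with a smooth radial cutoff $\chi\in C_c^\infty(\R^d)$, $\chi\equiv 1$ on $\{|x|\le 1/2\}$ and $\chi\equiv 0$ on $\{|x|\ge 1\}$, and decompose $e^{t\Delta}f=\chi\, e^{t\Delta}f+(1-\chi)\,e^{t\Delta}f$. Since $f$ is radial so is $u(t,\cdot):=e^{t\Delta}f$, and on the outer region we have $|x|^{\alpha}\gtrsim 1$ for $\alpha\ge 0$, hence by Lemma \ref{lem:radial-Sob} applied pointwise in $t$,
$$\bigl\|(1-\chi)u(t)\bigr\|_{L^q_x}\lesssim \bigl\||x|^{\alpha}u(t)\bigr\|_{L^q_x}\lesssim \bigl\||\nabla|^{s}u(t)\bigr\|_{L^p_x},\qquad \alpha+s=d\bigl(\tfrac1p-\tfrac1q\bigr).$$
Taking $L^q_t$ norms and invoking a heat-Strichartz-type bound derived from Lemma \ref{lem:Stri} (with the key input being $\|\nabla e^{t\Delta}f\|_{L^2_{tx}}\lesssim\|f\|_{L^2}$ and interpolation against $\|e^{t\Delta}f\|_{L^\infty_t L^2_x}$) should deliver $\||\nabla|^{s}e^{t\Delta}f\|_{L^q_t L^p_x}\lesssim \||\nabla|^{\delta}f\|_{L^2_x}$; a dimension count then forces $\alpha=\delta_c-\delta$, so $\alpha\ge 0$ is equivalent to $\delta\le\delta_c$.

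For the inner piece $\chi\,e^{t\Delta}f$, the supports of $\chi$ and of $f$ are separated by distance $\ge 1/2$, so either direct pointwise bounds on the Gaussian heat kernel, or a dyadic frequency decomposition combined with the mismatch estimate of Lemma \ref{lem:mismatch}, yield rapid spatial and temporal decay; the resulting contribution is easily dominated by $\|f\|_{\dot H^{\delta}}$ and is essentially harmless.

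The main technical obstacle will be the parameter matching in the outer region: the triple $(\alpha,s,p)$ must simultaneously satisfy the hypotheses of Lemma \ref{lem:radial-Sob} (namely $\alpha>-d/q$, $1/q\le 1/p\le 1/q+s$, $0<s<d$) and yield a genuine (not merely scale-consistent) heat-Strichartz inequality for $\||\nabla|^{s}e^{t\Delta}f\|_{L^q_t L^p_x}$. I expect the upper endpoint $\delta<1-4/q$ to arise from the choice $s=1$, so that the input estimate is precisely \eqref{nabla-L2} from Lemma \ref{lem:Stri}, and the lower endpoint $\delta>1/2-3/q$ to come from the compatibility condition $1/p\le 1/q+s$ with the $p$ fixed by that choice. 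In the regime where $\delta$ is negative (permitted for small $q$), direct application of Lemma \ref{lem:radial-Sob} breaks down, and I anticipate using a Littlewood--Paley splitting $f=P_{\le 1}f+P_{>1}f$: the standard scale-critical Strichartz estimate handles the high-frequency part with Bernstein loss absorbed by $\delta<\delta_c$, while the radial weight gain is applied only to the low-frequency part, where the support condition provides the decisive compactness.
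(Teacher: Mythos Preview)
Your plan is workable in principle but substantially more involved than the paper's. The simplification you miss is that the support hypothesis lives on $f$, not on $e^{t\Delta}f$; since the heat semigroup is bounded on $L^\infty$, one may apply Lemma~\ref{lem:radial-Sob} directly to $f$ at the endpoint $p=2$, $q=\infty$, obtaining for any $s\in(\tfrac12,1)$ (with $\alpha=\tfrac d2-s>0$)
\[
\|e^{t\Delta}f\|_{L^\infty_{tx}}\lesssim\|f\|_{L^\infty}\le\bigl\||x|^{\alpha}f\bigr\|_{L^\infty}\lesssim\bigl\||\nabla|^{s}f\bigr\|_{L^2}.
\]
Interpolating this against \eqref{nabla-L2} (equivalently $\|e^{t\Delta}f\|_{L^2_{tx}}\lesssim\||\nabla|^{-1}f\|_{L^2}$) with weight $\theta=2/q$ yields $\delta=(1-\tfrac2q)s-\tfrac2q$, and letting $s$ sweep $(\tfrac12,1)$ gives exactly the stated range $\delta\in(\tfrac12-\tfrac3q,\,1-\tfrac4q)$. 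No spatial cutoff, no mismatch lemma, no Littlewood--Paley splitting is needed.

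Your route---applying the radial-Sobolev gain to the evolved solution $u(t)$ rather than to $f$---forces a decomposition and an ``inner piece'' that is less innocuous than you suggest. With your cutoff as defined ($\chi\equiv0$ on $\{|x|\ge1\}$) the supports of $\chi$ and of $f$ both meet the sphere $\{|x|=1\}$, so they are \emph{not} separated and the kernel/mismatch step fails as written. Shrinking the cutoff repairs this, but you then still owe a bound on $\chi\,e^{t\Delta}f$ by $\||\nabla|^\delta f\|_{L^2}$ for possibly negative $\delta$, which ultimately forces you back to the support condition anyway. Your endpoint bookkeeping is also off: in your own scheme (taking $p=2$ in the radial Sobolev step for $u(t)$ and using the $L^q_tL^2_x$ heat estimate) one finds $s=\delta+2/q$, so the lower endpoint does come from $1/p\le 1/q+s$, but the upper restriction comes from $\alpha\ge0$, i.e.\ $\delta\le\delta_c=\tfrac d2-\tfrac{d+2}q$, a dimension-dependent bound that equals $1-\tfrac4q$ only when $d=2$; it does not come from ``$s=1$ and \eqref{nabla-L2}'' as you claim.
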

\begin{proof} By Lemma \ref{lem:Stri}, we have
$$
\big\|e^{t\Delta}f\big\|_{L^\infty_{tx}(\R^+\times\R^d)}\lesssim \|f\|_{L^\infty(\R^d)}.
$$
Let $\alpha = \frac d2-s>0$ and $s\in (\frac12,1)$, then by Lemma \ref{lem:radial-Sob} we have
$$
\|f\|_{L^\infty(\R^d)}\lesssim\||x|^\alpha f\|_{L^\infty(\R^d)}\lesssim \big\||\nabla|^{s}f\big\|_{L^2(\R^d)},
$$
where the first inequality above has used the condition $\mbox{supp }f\subset \{x:|x|\ge1\}$.
Thus we get that
\begin{align}
\big\|e^{t\Delta}f\big\|_{L^\infty_{tx}(\R^+\times\R^d)}\lesssim \big\||\nabla|^{s}f\big\|_{L^2(\R^d)}.\label{12.44}
\end{align}
Interpolation between this last estimate and \eqref{nabla-L2}, gives our desired estimates.
\end{proof}

\subsection{A decomposition of the initial data $h_0$} We use $\chi_{\le a}$ for $a\in \R^+$ to denote the smooth function
\begin{align*}
\chi_{\le a}(x)=\left\{ \aligned
1, \ & |x|\le a,\\
0,    \ &|x|\ge \frac{11}{10} a,
\endaligned
  \right.
\end{align*}
and set $\chi_{\ge a}=1-\chi_{\le a}$.

Now write
\begin{align}\label{initial-decp}
h_0=v_0+w_0,
\end{align}
where
$$
v_0=\chi_{\ge \frac12}\big(P_{\ge N}h_0\big),\quad w_0=h_0-v_0.
$$

 \underline{First, we will claim that $w_0\in L^2(\R^d)$ and}
\begin{align}
\|w_0\|_{L^2(\R^d)}\lesssim N^{\gamma_0}\big\|h_0\big\|_{\dot H^{-\gamma_0}(\R^d)}.\label{eq:21.43}
\end{align}
Note that we can write that
$$w_0=\chi_{\le \frac12}\big(P_{\ge N}h_0\big)+P_{< N}h_0.$$
By the Bernstein estimate (see Section 2.1 ), we have that
$$
\big\|P_{<N}h_0\big\|_{L^2(\R^d)}\lesssim
N^{\gamma_0}\big\|h_0\big\|_{\dot  H^{-\gamma_0}(\R^d)}.
$$
This  combining with the following Lemma \ref{lem:h-mis},   gives the desired \eqref{eq:21.43}.
\begin{lem}\label{lem:h-mis}
Let $h_0$ be the function satisfying the hypothesis in Theorem \ref{thm:main1}, then
\begin{align}
\big\|\chi_{\le \frac12}\big(P_{\ge N}h_0\big)\big\|_{L^2(\R^d)}\lesssim
N^{-1}\big\|h_0\big\|_{\dot H^{-\gamma_0}(\R^d)}.
\end{align}
\end{lem}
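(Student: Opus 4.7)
I plan to prove this by duality combined with Lemma \ref{lem:mismatch}. By $L^{2}$-$L^{2}$ duality, it suffices, for every $g\in L^{2}(\R^{d})$ with $\|g\|_{L^{2}}\le 1$, to bound $|\langle \chi_{\le 1/2}P_{\ge N}h_{0},g\rangle|$ by $N^{-1}\|h_{0}\|_{\dot H^{-\gamma_{0}}}$. Fix a smooth cutoff $\phi$ with $\phi\equiv 1$ on $\{|x|\ge 1\}$ and $\phi\equiv 0$ on $\{|x|\le 9/10\}$; since $\mathrm{supp}\,h_{0}\subset\{|x|\ge 1\}$ one has $\phi h_{0}=h_{0}$. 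Using self-adjointness of $P_{\ge N}$ and the $\dot H^{-\gamma_{0}}$--$\dot H^{\gamma_{0}}$ pairing,
\begin{align*}
\bigl|\langle\chi_{\le 1/2}P_{\ge N}h_{0},g\rangle\bigr|
=\bigl|\langle h_{0},\phi P_{\ge N}(\chi_{\le 1/2}g)\rangle\bigr|
\le \|h_{0}\|_{\dot H^{-\gamma_{0}}}\,\bigl\|\phi P_{\ge N}(\chi_{\le 1/2}g)\bigr\|_{\dot H^{\gamma_{0}}}.
\end{align*}
Everything therefore reduces to the operator-level estimate $\|\phi P_{\ge N}(\chi_{\le 1/2}g)\|_{\dot H^{\gamma_{0}}}\lesssim N^{-1}\|g\|_{L^{2}}$.

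Since $\chi_{\le 1/2}g$ vanishes on $\mathrm{supp}\,\phi$, I rewrite $\phi P_{\ge N}(\chi_{\le 1/2}g)=-\phi P_{<N}(\chi_{\le 1/2}g)$, which places us exactly in the mismatch setting of Lemma \ref{lem:mismatch}: the supports of $\phi$ and $\chi_{\le 1/2}$ are separated by $A:=9/10-11/20=7/20>0$. Lemma \ref{lem:mismatch} then gives $\|\phi P_{<N}(\chi_{\le 1/2}g)\|_{L^{2}}\lesssim_{m} N^{-m}\|g\|_{L^{2}}$ for any $m>0$. To upgrade this to the $\dot H^{\gamma_{0}}$-norm (note $\gamma_{0}\in[0,1)$ throughout the admissible range, since $(d-1)/(d+2)<1$), I would prove the analogous $\dot H^{1}$ bound through the Leibniz rule
\[
\nabla\bigl(\phi P_{<N}(\chi_{\le 1/2}g)\bigr)=(\nabla\phi)\,P_{<N}(\chi_{\le 1/2}g)+\phi\,\nabla P_{<N}(\chi_{\le 1/2}g),
\]
controlling the first term by Lemma \ref{lem:mismatch} with $\nabla\phi$ as cutoff (whose support remains separated from that of $\chi_{\le 1/2}$), and the second by repeating the kernel-decay argument behind Lemma \ref{lem:mismatch} for the kernel of $\nabla P_{<N}$, which enjoys the same Schwartz decay as that of $P_{<N}$ up to a single extra factor of $N$ easily absorbed by enlarging $m$. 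The logarithmic-convexity bound $\|f\|_{\dot H^{\gamma_{0}}}\le \|f\|_{L^{2}}^{1-\gamma_{0}}\|f\|_{\dot H^{1}}^{\gamma_{0}}$ then yields $\|\phi P_{<N}(\chi_{\le 1/2}g)\|_{\dot H^{\gamma_{0}}}\lesssim_{m} N^{-m}$, in particular the claimed $N^{-1}$.

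The principal obstacle is the passage from a Lebesgue-space mismatch estimate to one in $\dot H^{\gamma_{0}}$: the nonlocal operator $|\nabla|^{\gamma_{0}}$ does not commute with the spatial cutoff $\phi$, so one cannot apply Lemma \ref{lem:mismatch} directly to $|\nabla|^{\gamma_{0}}\bigl(\phi P_{<N}(\chi_{\le 1/2}g)\bigr)$. Routing through $\dot H^{1}$ via the Leibniz rule resolves this by distributing one classical derivative across the product, at the harmless cost of one interpolation. No smallness of $\gamma_{0}$ or of the gap $A$ is required, because the underlying $L^{2}$ and $\dot H^{1}$ mismatch estimates already decay in $N$ to arbitrary polynomial order.
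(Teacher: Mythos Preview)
Your argument is correct and takes a genuinely different route from the paper. The paper works directly: it writes $\chi_{\le 1/2}(P_{\ge N}h_0)=\chi_{\le 1/2}(P_{\ge N}\chi_{\ge 9/10}h_0)$, then splits $h_0=P_{\le 2N}h_0+\sum_{M\ge 4N}P_Mh_0$. The low-frequency piece is handled by a single application of Lemma~\ref{lem:mismatch} followed by Bernstein (costing $N^{\gamma_0}$, which is absorbed by the $N^{-10}$ from mismatch). For each high-frequency piece the paper rewrites $P_{\ge N}=I-P_{<N}$, uses disjointness of the spatial cutoffs to kill the identity term, and then observes that $P_{<N}(\chi_{\ge 9/10}P_Mh_0)$ can only survive through the high-frequency tail $P_{\ge M/8}(\chi_{\ge 9/10})$ of the cutoff, which decays like $M^{-2}$; summing in $M$ gives $N^{-1}$.

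Your duality approach bypasses the frequency decomposition of $h_0$ entirely: by moving the cutoffs and the projector onto the test function $g$, you land immediately in the mismatch configuration $\phi P_{<N}(\chi_{\le 1/2}g)$ with compactly supported $\chi_{\le 1/2}g$, and the only extra work is lifting the $L^2$ mismatch bound to $\dot H^{\gamma_0}$ via interpolation with $\dot H^1$. This is shorter and more transparent; the paper's decomposition, on the other hand, stays on the $h_0$ side and never needs to pass through a fractional Sobolev norm. Both ultimately exploit the same mechanism (rapid kernel decay of $P_{<N}$ across a spatial gap), but your route packages it more efficiently for this particular estimate. Your handling of the $\dot H^1$ step is fine: the kernel of $\nabla P_{<N}$ is $N$ times a rescaled Schwartz function, so the mismatch argument goes through with one extra power of $N$, which is harmless since $m$ is arbitrary.
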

\begin{proof}
By the support property of $h_0$, we may write
\begin{align}
\chi_{\le \frac12}&\big(P_{\ge N}h_0\big)
=\chi_{\le \frac12}\big(P_{\ge N}\chi_{\ge \frac{9}{10}}h_0\big)\notag\\
=&
\chi_{\le \frac12}\big(P_{\ge N}\chi_{\ge \frac{9}{10}}P_{\le 2N}h_0\big)
+\sum\limits_{M=4N}^\infty\chi_{\le \frac12}P_{\ge N}\big(\chi_{\ge \frac{9}{10}}P_Mh_0\big).\label{20.55}
\end{align}
By Lemma \ref{lem:mismatch} and Bernstein's inequality, we have
\begin{align}
\big\|\chi_{\le \frac12}\big(P_{\ge N}\chi_{\ge \frac{9}{10}}P_{\le 2N}h_0\big)
\big\|_{L^2(\R^d)}
&\lesssim
N^{-10}\big\|P_{\le 2N}h_0\big\|_{L^2(\R^d)}\notag\\
&\lesssim
N^{-1}\big\|h_0\big\|_{\dot  H^{-\gamma_0}(\R^d)}.
\label{20.55-I}
\end{align}
Moreover, since $P_{\ge N}=I-P_{< N}$ and $M>2N$, we obtain
\begin{align*}
\chi_{\le \frac12}&P_{\ge N}\big(\chi_{\ge \frac{9}{10}}P_Mh_0\big)
=-\chi_{\le \frac12}P_{< N}\big(\chi_{\ge \frac{9}{10}}P_Mh_0\big)\\
&=-\chi_{\le \frac12}P_{< N}\Big(P_{\ge\frac18M}\big(\chi_{\ge \frac{9}{10}}\big)P_Mh_0\Big),
\end{align*}
where $P_{\ge\frac18M}(\chi_{\ge \frac{9}{10}})$ denotes the high frequency truncation of the bump function $\chi_{\ge \frac{9}{10}}$.

Note that
\begin{align*}
\Big\|\chi_{\le \frac12}P_{<N}\Big(P_{\ge\frac18M}&\big(\chi_{\ge \frac{9}{10}}\big)P_Mh_0\Big)\Big\|_{L^2(\R^d)}
\lesssim \big\|P_{\ge\frac18M}\big(\chi_{\ge \frac{9}{10}}\big)\big\|_{L^\infty(\R^d)}\big\|P_Mh_0\big\|_{L^2(\R^d)}\\
\lesssim &M^{-2}\big\|\Delta P_{\ge\frac18M}\big(\chi_{\ge \frac{9}{10}}\big)\big\|_{L^\infty(\R^d)}\big\|P_Mh_0\big\|_{L^2(\R^d)}\\
\lesssim &M^{-1}\big\|h_0\big\|_{\dot H^{-\gamma_0}(\R^d)}.
\end{align*}
Hence, we have
\begin{align*}
\Big\|\chi_{\le \frac12}&P_{\ge N}\Big(\chi_{\ge \frac{9}{10}}P_Mh_0\Big)\Big\|_{L^2(\R^d)}
\lesssim M^{-1}\big\|h_0\big\|_{\dot H^{-\gamma_0}(\R^d)}.
\end{align*}
Therefore, taking summation, we obtain
\begin{align}
\sum\limits_{M=4N}^\infty\big\|\chi_{\le \frac12}P_{\ge N}\big(\chi_{\ge \frac{9}{10}}P_Mh_0\big)\big\|_{L^2(\R^d)}
\lesssim N^{-1}\big\|h_0\big\|_{\dot H^{-\gamma_0}(\R^d)}.
\label{20.55-II}
\end{align}
Inserting \eqref{20.55-I} and \eqref{20.55-II} into \eqref{20.55}, we prove the lemma.
\end{proof}

\underline{Second, we claim that $v_0\in H^{-\gamma_0}(\R^d)$ and}
\begin{align}
\big\|v_0\big\|_{\dot  H^{-\gamma_0}(\R^d)}\lesssim \big\|h_0\big\|_{\dot  H^{-\gamma_0}(\R^d)}.\label{11.56}
\end{align}
Indeed, since $v_0=h_0-\chi_{\le \frac12}\big(P_{\ge N}h_0\big)$, so it follows that
\begin{align*}
\big\|v_0\big\|_{\dot  H^{-\gamma_0}(\R^d)}
\lesssim &
\big\|h_0\big\|_{\dot  H^{-\gamma_0}(\R^d)}
+\big\|\chi_{\le \frac12}\big(P_{\ge N}h_0\big)\big\|_{\dot  H^{-\gamma_0}(\R^d)}.
\end{align*}
Hence, we only consider the latter term. By Sobolev's embedding and H\"{o}lder's inequality, we have
\begin{align*}
\big\|\chi_{\le \frac12}\big(P_{\ge N}h_0\big)\big\|_{\dot  H^{-\gamma_0}(\R^d)}\lesssim
\big\|\chi_{\le \frac12}\big(P_{\ge N}h_0\big)\big\|_{L^q(\R^d)}
\lesssim
\big\|\chi_{\le \frac12}\big(P_{\ge N}h_0\big)\big\|_{L^2(\R^d)}.
\end{align*}
where $1/2=1/q-\gamma_0/d $ with some $q<2$. Hence \eqref{11.56} follows from Lemma \ref{lem:h-mis}.

\subsection{The solutions associated with $v_0$ and $w_0$ }

We denote
$$
v_L(t)=e^{t\Delta}v_0,\ \ w(t)=h(t)-v_L(t).
$$

\underline{For the linear part $v_L(t)$},  it follows that by Plancherel's theorem and \eqref{11.56},
\begin{align}\label{linear part}
\big\|v_L(t)\big\|_{L^\infty_t \dot H^{-\gamma_0}_x(\R^+\times\R^d)}\lesssim \big\|v_0\big\|_{\dot  H^{-\gamma_0}(\R^d)}\lesssim \big\|h_0\big\|_{\dot  H^{-\gamma_0}(\R^d)}.
\end{align}
Hence $v_L(t)$ is globally existence in time and satisfies the following time-space estimate:

\begin{lem}\label{impro-est}
Let $\epsilon$ be a sufficiently small positive constant such that $0<\epsilon<\frac{d-1}{d+2}-\gamma_0$, then we have that
\begin{align}
\big\|v_L(t)\big\|_{L^\frac{2(2+d)}{d}_{tx}(\R^+\times\R^d)}\lesssim N^{-\frac{d-1}{d+2}+\gamma_0+\epsilon}\big\|h_0\big\|_{\dot  H^{-\gamma_0}(\R^d)}.\label{1.14}
\end{align}
\end{lem}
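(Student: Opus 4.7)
The plan is to apply Lemma~\ref{prop:superest} directly to $v_0$ with a negative regularity exponent $\delta$ matched to the target $q=\frac{2(d+2)}{d}$, and then extract the claimed power of $N$ from the frequency localization of $P_{\ge N}h_0$ combined with Lemma~\ref{lem:h-mis}. Two features make this work: first, $v_0$ is still radial and supported away from the origin (in $\{|x|\ge 1/2\}$), so Lemma~\ref{prop:superest} is available after a harmless adjustment of constants; second, the frequency projection $P_{\ge N}$ provides the required $N$-decay once the regularity exponent is chosen above $-\gamma_0$.

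Choose $q=\tfrac{2(d+2)}{d}$ and $\delta=-\tfrac{d-1}{d+2}+\epsilon$. A direct computation yields $\bigl(\tfrac12-\tfrac3q,\,1-\tfrac4q\bigr)=\bigl(\tfrac{1-d}{d+2},\,\tfrac{2-d}{d+2}\bigr)$, and choosing $\epsilon$ small enough (smaller than both $\tfrac1{d+2}$ and $\tfrac{d-1}{d+2}-\gamma_0$, which is permitted by the hypothesis and by $\gamma_0\ge 0$) places $\delta$ inside this window while also guaranteeing $\delta+\gamma_0<0$. The support hypothesis of Lemma~\ref{prop:superest} enters its proof only through the bound $\|f\|_{L^\infty}\lesssim\||x|^{d/2-s}f\|_{L^\infty}$, which costs only a factor of $2^{d/2-s}$ when the support of $f$ lies in $\{|x|\ge 1/2\}$. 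Hence
\[
\|v_L\|_{L^q_{tx}(\R^+\times\R^d)}=\|e^{t\Delta}v_0\|_{L^q_{tx}(\R^+\times\R^d)}\lesssim \bigl\||\nabla|^\delta v_0\bigr\|_{L^2_x(\R^d)}.
\]

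It remains to show $\||\nabla|^\delta v_0\|_{L^2}\lesssim N^{\delta+\gamma_0}\|h_0\|_{\dot H^{-\gamma_0}}$. Because $\mathrm{supp}\,h_0\subset\{|x|\ge 1\}$, we have $\chi_{\ge 1/2}h_0=h_0$, so
\[
v_0=\chi_{\ge 1/2}\bigl(P_{\ge N}h_0\bigr)=P_{\ge N}h_0-\chi_{\le 1/2}\bigl(P_{\ge N}h_0\bigr).
\]
For the first piece, Plancherel's theorem together with $\delta+\gamma_0<0$ gives
\[
\bigl\||\nabla|^\delta P_{\ge N}h_0\bigr\|_{L^2}^2=\int_{|\xi|\ge N}|\xi|^{2(\delta+\gamma_0)}\,|\xi|^{-2\gamma_0}|\widehat{h_0}(\xi)|^2\,d\xi\le N^{2(\delta+\gamma_0)}\|h_0\|_{\dot H^{-\gamma_0}}^2.
\]
For the cutoff error, the dual Sobolev embedding $L^p\hookrightarrow\dot H^\delta$ with $\tfrac1p=\tfrac12-\tfrac\delta d$ (valid since $\delta<0$), followed by H\"older's inequality on the compact support of $\chi_{\le 1/2}(P_{\ge N}h_0)$, and finally Lemma~\ref{lem:h-mis}, yield
\[
\bigl\||\nabla|^\delta\chi_{\le 1/2}(P_{\ge N}h_0)\bigr\|_{L^2}\lesssim\bigl\|\chi_{\le 1/2}(P_{\ge N}h_0)\bigr\|_{L^2}\lesssim N^{-1}\|h_0\|_{\dot H^{-\gamma_0}}.
\]
Since $\tfrac{d-1}{d+2}<1$ for all $d\ge 2$ we have $\delta+\gamma_0>-1$, so $N^{-1}\le N^{\delta+\gamma_0}$ for $N\ge 1$, and the two contributions combine to the desired estimate.

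The only delicate point is the choice of $\delta$, which must be negative enough to force $\delta+\gamma_0<0$ (so that $P_{\ge N}h_0$ actually supplies the weight $N^{\delta+\gamma_0}$ in $\dot H^\delta$), yet must remain strictly above $\tfrac{1-d}{d+2}$ so that Lemma~\ref{prop:superest} is applicable. These two competing constraints are simultaneously satisfiable precisely when $\gamma_0<\tfrac{d-1}{d+2}$, which is exactly the admissibility range imposed in Theorem~\ref{thm:main1}; the parameter $\epsilon$ measures the narrow slack between them.
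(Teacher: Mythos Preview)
Your proof is correct and follows essentially the same route as the paper's: apply Lemma~\ref{prop:superest} with the exponent $\delta=\gamma=-\tfrac{d-1}{d+2}+\epsilon$, then split $v_0=P_{\ge N}h_0-\chi_{\le 1/2}(P_{\ge N}h_0)$ and handle the two pieces respectively by Bernstein/Plancherel and by Sobolev embedding plus Lemma~\ref{lem:h-mis}. You are in fact slightly more careful than the paper in explicitly checking that $\delta$ lies in the admissible window, that the support of $v_0$ (in $\{|x|\ge 1/2\}$ rather than $\{|x|\ge 1\}$) only costs a harmless constant in Lemma~\ref{prop:superest}, and that $N^{-1}\le N^{\delta+\gamma_0}$ so the two pieces combine.
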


\begin{proof}Let $\gamma=-\frac{d-1}{d+2}+\epsilon$, then by Lemma \ref{prop:superest},
\begin{align*}
\big\|v_L(t)\big\|_{L^\frac{2(2+d)}{d}_{tx}(\R^+\times\R^d)}\lesssim \big\||\nabla|^{\gamma}\chi_{\ge \frac12}\big(P_{\ge N}h_0\big)\big\|_{L^2(\R^d)}.
\end{align*}
Note that
$$
\big\||\nabla|^{\gamma}\chi_{\ge \frac12}\big(P_{\ge N}h_0\big)\big\|_{L^2(\R^d)}
\le \big\||\nabla|^{\gamma}\big(P_{\ge N}h_0\big)\big\|_{L^2(\R^d)}+\big\||\nabla|^{\gamma}\chi_{\le \frac12}\big(P_{\ge N}h_0\big)\big\|_{L^2(\R^d)}.
$$
For the first term, since $\gamma<-\gamma_0$, by Bernstein's inequality,
$$
\big\||\nabla|^{\gamma}\big(P_{\ge N}h_0\big)\big\|_{L^2(\R^d)}
\lesssim
N^{\gamma+\gamma_0}\big\|h_0\big\|_{\dot H^{-\gamma_0}(\R^d)}.
$$
So we only need to estimate the second term. Let $q$ be the parameter satisfying
$$
\frac1q=\frac12-\frac\gamma d,
$$
then $q>1$.
Since $\gamma<0$, by Sobolev's and H\"older's inequalities,
$$
\big\||\nabla|^{\gamma}\chi_{\le \frac12}\big(P_{\ge N}h_0\big)\big\|_{L^2(\R^d)}
\lesssim \big\|\chi_{\le \frac12}\big(P_{\ge N}h_0\big)\big\|_{L^q(\R^d)}
\lesssim  \big\|\chi_{\le \frac12}\big(P_{\ge N}h_0\big)\big\|_{L^2(\R^d)}.
$$
Furthermore, by Lemma \ref{lem:h-mis},
$$
 \big\|\chi_{\le \frac12}\big(P_{\ge N}h_0\big)\big\|_{L^2(\R^d)}
 \lesssim N^{-1}\big\|h_0\big\|_{\dot H^{-\gamma_0}(\R^d)}.
$$
Combining the last two estimates above, we obtain
$$
\big\||\nabla|^{\gamma}\chi_{\le \frac12}\big(P_{\ge N}h_0\big)\big\|_{L^2(\R^d)}
\lesssim N^{-1}\big\|h_0\big\|_{\dot H^{-\gamma_0}(\R^d)}.
$$
This gives the desired \eqref{1.14}.
\end{proof}

\underline{For the nonlinear part $w(t)$},  it is clear that $w$ satisfies with the following Cauchy problem:
\begin{equation}\label{heat-w}
   \left\{ \aligned
    &\partial_t w=\Delta w\pm|w+v_L|^{\frac4d}(w+v_L), \\
    &w(0,x)=w_0(x)=h_0-v_0\in L^2(\R^d),
   \endaligned
  \right.
 \end{equation}
which is equivalent to the original Cauchy problem \eqref{heat}.  The following lemma is to establish the local well-posedness of the  \eqref{heat-w} and give global criterion of $w$.
\begin{lem}\label{lem:local} There exists $\delta>0$, such that for any $h_0$ satisfying the hypothesis in Theorem \ref{thm:main1} and $w_0=h_0-v_0$,  the Cauchy problem \eqref{heat-w} is well-posed on the time interval $[0,\delta]$, and
the solution $$w\in C_tL^2_x([0,\delta]\times\R^d)\cap L^\frac{2(2+d)}{d}_{tx}([0,\delta]\times\R^d)\cap L^2_t\dot H^1_x([0,\delta]\times\R^d).$$ Furthermore, let $T^*$ be the maximal lifespan, and  suppose that
$$w\in L^\frac{2(2+d)}{d}_{tx}([0,T^*)\times\R^d),$$
then $T^*=+\infty$. In particular, if $\|h_0\|_{\dot  H^{-\gamma_0}(\R^d)}\ll 1$, then $T^*=+\infty$ and
$$
\|w\|_{C_tL^2_x([0,+\infty]\times\R^d)}\lesssim\|h_0\|_{\dot  H^{-\gamma_0}(\R^d)} .
$$
\end{lem}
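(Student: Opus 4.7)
The plan is to run a standard Picard iteration for the Duhamel formulation
\begin{equation*}
w(t) = e^{t\Delta}w_0 \pm \int_0^t e^{(t-s)\Delta}\bigl|w(s)+v_L(s)\bigr|^{4/d}\bigl(w(s)+v_L(s)\bigr)\,ds,
\end{equation*}
working in the critical Strichartz space $X(I) := C_tL^2_x(I\times\R^d)\cap L^{2(2+d)/d}_{tx}(I\times\R^d)\cap L^2_t\dot H^1_x(I\times\R^d)$ with $I=[0,\delta]$. The two linear ingredients are already in Lemma \ref{lem:Stri}: estimate \eqref{nabla-L2'} gives $\|e^{t\Delta}w_0\|_{X(I)}\lesssim\|w_0\|_{L^2}$, and \eqref{Lpq} controls the Duhamel integral by $\|F\|_{L^{2(2+d)/(d+4)}_{tx}}$. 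For the nonlinearity the identity $\tfrac{2(2+d)}{d+4}\cdot\tfrac{d+4}{d}=\tfrac{2(2+d)}{d}$ yields, by H\"older,
\begin{equation*}
\bigl\||w+v_L|^{4/d}(w+v_L)\bigr\|_{L^{2(2+d)/(d+4)}_{tx}(I\times\R^d)}\lesssim \Bigl(\|w\|_{L^{2(2+d)/d}_{tx}(I\times\R^d)} + \|v_L\|_{L^{2(2+d)/d}_{tx}(I\times\R^d)}\Bigr)^{(d+4)/d},
\end{equation*}
with the corresponding Lipschitz bound on differences coming from $\bigl||a|^{4/d}a-|b|^{4/d}b\bigr|\lesssim (|a|^{4/d}+|b|^{4/d})|a-b|$.

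For local existence with arbitrary $w_0\in L^2$, I would choose $\delta>0$ so small that
\begin{equation*}
\|e^{t\Delta}w_0\|_{L^{2(2+d)/d}_{tx}([0,\delta]\times\R^d)} + \|v_L\|_{L^{2(2+d)/d}_{tx}([0,\delta]\times\R^d)} \le \eta,
\end{equation*}
where $\eta$ is a small universal threshold extracted from the contraction estimates above. This is possible: the first summand tends to zero with $\delta$ by dominated convergence since $e^{t\Delta}w_0$ lies in $L^{2(2+d)/d}_{tx}(\R^+\times\R^d)$ by \eqref{nabla-L2'}, and the second by Lemma \ref{impro-est} combined with the same dominated convergence argument. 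A contraction in the closed ball of radius $2\eta$ of $X([0,\delta])$ then yields a unique fixed point $w$, with the $C_tL^2_x$ and $L^2_t\dot H^1_x$ norms obtained from \eqref{Lpq} as a byproduct.

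For the global criterion, the finiteness of $\|w\|_{L^{2(2+d)/d}_{tx}([0,T^*)\times\R^d)}$ together with the global bound on $\|v_L\|_{L^{2(2+d)/d}_{tx}(\R^+\times\R^d)}$ from Lemma \ref{impro-est} allows a partition of $[0,T^*)$ into finitely many subintervals on each of which the sum of these two Strichartz norms is at most $\eta$. Iterating the Duhamel estimate on consecutive subintervals propagates $\|w(t)\|_{L^2}$ with a controlled growth factor, hence $\|w\|_{L^\infty_tL^2_x([0,T^*)\times\R^d)}<\infty$; restarting the local theory near $T^*$ then extends $w$ past $T^*$, forcing $T^*=+\infty$. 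For the small-data statement I balance the parameter $N$ in the splitting of $h_0$: the bounds \eqref{eq:21.43} and \eqref{1.14} carry $N$-exponents of opposite sign ($\gamma_0>0$ versus $-\tfrac{d-1}{d+2}+\gamma_0+\epsilon<0$), so setting $N$ as a suitable small negative power of $\|h_0\|_{\dot H^{-\gamma_0}}$ makes both $\|w_0\|_{L^2}$ and $\|v_L\|_{L^{2(2+d)/d}_{tx}(\R^+\times\R^d)}$ small simultaneously, and the contraction closes globally on $\R^+$, producing the claimed uniform $L^\infty_tL^2_x$ bound.

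The main obstacle I anticipate is that $v_L$ does not belong to $L^\infty_tL^2_x$, because $v_0$ lives only in $\dot H^{-\gamma_0}$ with $\gamma_0>0$; consequently every estimate in which the forcing $v_L$ enters the nonlinearity must be driven purely by its critical Strichartz norm $L^{2(2+d)/d}_{tx}$. That is exactly what Lemma \ref{impro-est} supplies, and the H\"older exponents match the scaling of the nonlinearity without producing any factor depending on $|I|$, so no additional regularity on $v_L$ is required. This is also why the local lifespan $\delta$ depends on the profile of $w_0$ through its linear flow rather than on $\|w_0\|_{L^2}$ alone, as is standard for $L^2$-critical problems.
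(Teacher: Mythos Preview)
Your proposal is correct and follows essentially the same route as the paper: Duhamel plus the Strichartz estimates of Lemma~\ref{lem:Stri}, the H\"older bound on the power nonlinearity, and smallness of $\|v_L\|_{L^{2(2+d)/d}_{tx}}$ from Lemma~\ref{impro-est}. The only cosmetic differences are that the paper runs a continuity (bootstrap) argument rather than an explicit contraction, and for local existence it gains smallness by taking $N$ large as well as $\delta$ small, whereas you shrink $\delta$ alone; for the small-data global statement your $N$-balancing is harmless but unnecessary---fixing $N$ (e.g.\ $N=1$) already makes both $\|w_0\|_{L^2}$ and $\|v_L\|_{L^{2(2+d)/d}_{tx}}$ of order $\|h_0\|_{\dot H^{-\gamma_0}}$ and yields the stated linear bound directly.
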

\begin{proof}
For local well-posedness, we only show that the solution $w\in L^\infty_tL^2_x([0,\delta]\times\R^d)\cap L^\frac{2(2+d)}{d}_{tx}([0,\delta]\times\R^d)\cap L^2_t\dot H^1_x([0,\delta]\times\R^d)$ for some $\delta>0$. Indeed, the local well-posedness with the lifespan $[0, \delta)$ is then followed by the standard fixed point argument.
By Duhamel's formula, we have
$$
w(t)=e^{t\Delta}w_0\pm\int_0^te^{(t-s)\Delta}|h(s)|^\frac4dh(s)\,ds.
$$
Then by Lemma \ref{lem:Stri}, for any $t_*\le \delta$,
\begin{align*}
\big\|w\big\|_{L^\frac{2(2+d)}{d}_{tx}([0,t_*]\times\R^d)}
\lesssim &\|e^{t\Delta}w_0\|_{L^\frac{2(2+d)}{d}_{tx}([0,t_*]\times\R^d)}+\big\||h|^\frac4dh\big\|_{L^\frac{2(2+d)}{d+4}_{tx}([0,t_*]\times\R^d)}\\
\lesssim &\|e^{t\Delta}w_0\|_{L^\frac{2(2+d)}{d}_{tx}([0,\delta]\times\R^d)}+\big\|h\big\|_{L^\frac{2(2+d)}{d}_{tx}([0,t_*]\times\R^d)}^{\frac4d+1}.
\end{align*}
Note that
$$
\big\|h\big\|_{L^\frac{2(2+d)}{d}_{tx}([0,t_*]\times\R^d)}\lesssim \big\|v_L\big\|_{L^\frac{2(2+d)}{d}_{tx}(\R^+\times\R^d)}
+\big\|w\big\|_{L^\frac{2(2+d)}{d}_{tx}([0,t_*]\times\R^d)},
$$
let $\eta_0=(\frac4d+1)\big(\frac{d-1}{d+2}-\gamma_0-\epsilon\big)>0$, then using \eqref{1.14}, we obtain
\begin{align*}
\big\|w\big\|_{L^\frac{2(2+d)}{d}_{tx}([0,t_*]\times\R^d)}
\lesssim &\|e^{t\Delta}w_0\|_{L^\frac{2(2+d)}{d}_{tx}([0,\delta]\times\R^d)}+N^{-\eta_0}\big\|h_0\big\|_{\dot  H^{-\gamma_0}(\R^d)}^{\frac4d+1}
+\big\|w\big\|_{L^\frac{2(2+d)}{d}_{tx}([0,t_*]\times\R^d)}^{\frac4d+1}.
\end{align*}
Noting that either $\|h_0\|_{\dot  H^{-\gamma_0}(\R^d)}\ll 1$, or choosing $\delta$ small enough and $N$ large enough, we have
$$
\|e^{t\Delta}w_0\|_{L^\frac{2(2+d)}{d}_{tx}([0,\delta]\times\R^d)}+N^{-\eta_0}\big\|h_0\big\|_{\dot  H^{-\gamma_0}(\R^d)}^{\frac4d+1}\ll1,
$$
then by the continuity argument, we get
\begin{align}
\big\|w\big\|_{L^\frac{2(2+d)}{d}_{tx}([0,\delta]\times\R^d)}\lesssim \|e^{t\Delta}w_0\|_{L^\frac{2(2+d)}{d}_{tx}([0,\delta]\times\R^d)}+N^{-\eta_0}\big\|h_0\big\|_{\dot  H^{-\gamma_0}(\R^d)}^{\frac4d+1}. \label{11.47}
\end{align}
Further, by Lemma \ref{lem:Stri} again,
\begin{align*}
\big\|w\big\|_{L^2_t\dot H^1_x([0,\delta]\times\R^d)}+\sup\limits_{t\in [0,\delta]}\big\|w\big\|_{L^2_x(\R^d)}
\lesssim &\|w_0\|_{L^2_x(\R^d)}+\big\||h|^\frac4dh\big\|_{L^\frac{2(2+d)}{d+4}_{tx}([0,\delta]\times\R^d)}\\
\lesssim &\|w_0\|_{L^2_x(\R^d)}+\big\|v_L\big\|_{L^\frac{2(2+d)}{d}_{tx}([0,\delta]\times\R^d)}^{\frac4d+1}
+\big\|w\big\|_{L^\frac{2(2+d)}{d}_{tx}([0,\delta]\times\R^d)}^{\frac4d+1}.
\end{align*}
Hence, using \eqref{1.14} and \eqref{11.47}, we obtain
\begin{align*}
\big\|w\big\|_{L^2_t\dot H^1_x([0,\delta]\times\R^d)}+\sup\limits_{t\in [0,\delta]}\big\|w\big\|_{L^2_x(\R^d)}
\le C,
\end{align*}
for some $C=C(N,\big\|h_0\big\|_{\dot  H^{-\gamma_0}(\R^d)})>0$.

Suppose that
$$w\in L^\frac{2(2+d)}{d}_{tx}([0,T^*)\times\R^d),$$
then if $T^*<+\infty$, we have
\begin{align*}
\big\|w(T^*)\big\|_{L^2_x(\R^d)}
\lesssim &\|e^{t\Delta}w_0\|_{L^\frac{2(2+d)}{d}_{tx}([0,T^*]\times\R^d)}+\big\||h|^\frac4dh\big\|_{L^\frac{2(2+d)}{d+4}_{tx}([0,T^*)\times\R^d)}\\
\lesssim &\|w_0\|_{L^2_{x}(\R^d)}+N^{-\eta_0}\big\|h_0\big\|_{\dot  H^{-\gamma_0}(\R^d)}^{\frac4d+1}
+\big\|w\big\|_{L^\frac{2(2+d)}{d}_{tx}([0,T^*)\times\R^d)}^{\frac4d+1}.
\end{align*}
Hence, $w$ exists on $[0,T^*]$, and $w(T^*)\in L^2(\R^d)$. Hence, using the local theory obtained before from time $T^*$, the lifespan can be extended to $T^*+\delta$, this is contradicted with the definition of the  maximal lifespan $T^*$. Hence, $T^*=+\infty$.
\end{proof}

\subsection{The proof of Theorem \ref{thm:main1}} We first prove the existences of $h(t)$ in Theorem \ref{thm:main1}. In fact,  let $h(t)=w(t)+v_L(t)$. Then from Lemma \ref{impro-est} and Lemma \ref{lem:local}, there exists a  $\delta>0$ depending on $\|h_0\|_{H^{-\gamma_0}}$ and $d$ such that
 \begin{equation}\label{local solution-II}
 h(t)\in C([0,\delta); L^2(\R^d)+\dot H^{-\gamma_0}(\R^d))\cap L^\frac{2(2+d)}{d}_{tx}([0,\delta]\times\R^d)
 \end{equation}
 and $h(t)$ satisfies the Cauchy equation \eqref{heat}  with the initial data $h_0$. Thus we have finished the local well-posedness of Cauchy equation \eqref{heat}.

In the sequel, we turn to show the unconditionally unique decomposition $\eqref{struct}$ of $h(t)$. Inspired by \cite{Monn}, we
apply the maximal $L^p$-regularity of the heat flow $e^{t\Delta}$.
Let $h_1,h_2$ be two any solutions of \eqref{heat} with the same initial data $h_0$, and satisfy
$$
h_1=e^{s\Delta}h_0+w_1;\quad h_2=e^{s\Delta}h_0+w_2,
$$
where $w_1,\ w_2\in C([0,\delta], L^2(\R^d))$. By the Duhamel formula, we have
\begin{align*}
w_1(t)=&\int_0^t e^{(t-s)\Delta}|e^{s\Delta}h_0+w_1|^{\frac4d}\big(e^{s\Delta}h_0+w_1\big)\,ds;\\
w_2(t)=&\int_0^t e^{(t-s)\Delta}|e^{s\Delta}h_0+w_2|^{\frac4d}\big(e^{s\Delta}h_0+w_2\big)\,ds.
\end{align*}
Denote $w=w_1-w_2$, then $w$ obeys
\begin{align*}
w(t)=&\int_0^t e^{(t-s)\Delta}\Big[|e^{s\Delta}h_0+w_1|^{\frac4d}\big(e^{s\Delta}h_0+w_1\big)-|e^{s\Delta}h_0+w_2|^{\frac4d}\big(e^{s\Delta}h_0+w_2\big)\Big]\,ds.
\end{align*}
Note that there exists an absolute constant $C>0$ such that
\begin{align*}
\Big||e^{s\Delta}h_0+w_1|^{\frac4d}\big(e^{s\Delta}h_0&+w_1\big)-|e^{s\Delta}h_0+w_2|^{\frac4d}\big(e^{s\Delta}h_0+w_2\big)\Big|\\
&\le C\Big(|e^{s\Delta}h_0|^{\frac4d}+|w_1|^{\frac4d}+|w_2|^{\frac4d}\Big)|w|.
\end{align*}
Then by the positivity of the heat kernel of $e^{t\Delta}$, we have
\begin{align*}
|w(t)|\le C\int_0^t e^{(t-s)\Delta}\Big(|e^{s\Delta}h_0|^{\frac4d}+|w_1(s)|^{\frac4d}+|w_2(s)|^{\frac4d}\Big)|w(s)|\,ds.
\end{align*}
Then we get that for $2\le p<\infty$, $\tau\in (0,\delta]$,
\begin{align*}
\|w\|_{L^p_t((0,\tau); L^2(\R^d))}&
\lesssim \Big\|\int_0^t e^{(t-s)\Delta}|e^{s\Delta}h_0|^{\frac4d}|w(s)|\,ds\Big\|_{L^p_t((0,\tau); L^2(\R^d))}\\
&\quad +\Big\|\int_0^t e^{(t-s)\Delta}\Big(|w_1(s)|^{\frac4d}+|w_2(s)|^{\frac4d}\Big)|w(s)|\,ds\Big\|_{L^p_t((0,\tau); L^2(\R^d))}:=I+II.
\end{align*}
For the first term $I$ above, by using Lemma \ref{lem:Stri-1} and choosing $p$ large enough, we have
\begin{align*}
I=\Big\|\int_0^t e^{(t-s)\Delta}|e^{s\Delta}h_0|^{\frac4d}|w(s)|\,ds\Big\|_{L^p_t((0,\tau); L^2(\R^d))}
\lesssim \Big\||e^{s\Delta}h_0|^{\frac4d}|w(s)|\Big\|_{L^{p_1}_t((0,\tau); L^{r_1}(\R^d))},
\end{align*}
where we have chose $(p_1,r_1)$ that
$$
\frac1{p_1}=\frac2{d+2}+\frac1p;\quad \frac1{r_1}=\frac2{d+2}+\frac12.
$$
(Note that $d>4$ and $p$ is large, we have that $p_1\in (1,2), r_1\in (1,2)$). Hence, by H\"older's inequality, we obtain that
\begin{align*}
I=\Big\|\int_0^t e^{(t-s)\Delta}|e^{s\Delta}&h_0|^{\frac4d}|w(s)|\,ds\Big\|_{L^p_t((0,\tau); L^2(\R^d))}\\
\lesssim &\Big\||e^{s\Delta}h_0|^{\frac4d}|w(s)|\Big\|_{L^{p_1}_t((0,\tau); L^{r_1}(\R^d))}\\
\lesssim & \big\|e^{s\Delta}h_0\big\|_{L^{\frac{2(d+2)}{d}}_{tx}\big((0,\tau)\times \R^d\big)}^{\frac4d}\big\|w\big\|_{L^{p}_t((0,\tau); L^2(\R^d))}.
\end{align*}
For the second term $II$ above, by using Lemma \ref{lem:max-Lp},
\begin{align*}
\Big\|\int_0^t e^{(t-s)\Delta}\Big(|w_1(s)&|^{\frac4d}+|w_2(s)|^{\frac4d}\Big)|w(s)|\,ds\Big\|_{L^p_t((0,\tau); L^2(\R^d))}\\
\lesssim & \Big\|(-\Delta)^{-1}\Big(\big(|w_1(s)|^{\frac4d}+|w_2(s)|^{\frac4d}\big)|w(s)|\Big)\Big\|_{L^{p}_t((0,\tau); L^2(\R^d))}.
\end{align*}
Since $d>4$, by Sobolev's embedding, we further have
\begin{align*}
II=\Big\|\int_0^t e^{(t-s)\Delta}\Big(|w_1(s)&|^{\frac4d}+|w_2(s)|^{\frac4d}\Big)|w(s)|\,ds\Big\|_{L^p_t((0,\tau); L^2(\R^d))}\\
\lesssim & \big\|\big(|w_1(s)|^{\frac4d}+|w_2(s)|^{\frac4d}\big)|w(s)|\big\|_{L^{p}_t((0,\tau); L^\frac{2d}{d+4}(\R^d))}\\
\lesssim & \Big(\|w_1\|_{L^\infty_t((0,\tau); L^2(\R^d))}^{\frac4d}+\|w_1\|_{L^\infty_t((0,\tau); L^2(\R^d))}^{\frac4d}\Big)\|w\|_{L^{p}_t((0,\tau); L^2(\R^d))}.
\end{align*}
Collecting  the estimates above, we obtain that
\begin{align}
\|w\|_{L^{p}_t((0,\tau); L^2(\R^d))}
\lesssim \rho(\tau)\cdot \|w\|_{L^{p}_t((0,\tau); L^2(\R^d))}, \label{22.38}
\end{align}
where
$$
\rho(\tau)=\big\|e^{s\Delta}h_0\big\|_{L^{\frac{2(d+2)}{d}}_{tx}\big((0,\tau)\times \R^d\big)}^{\frac4d}+\|w_1\|_{L^\infty_t((0,\tau); L^2(\R^d))}^{\frac4d}+\|w_2\|_{L^\infty_t((0,\tau); L^2(\R^d))}^{\frac4d}.
$$
By \eqref{1.14} and Lemma \ref{lem:Stri}, we have
$$
\big\|e^{s\Delta}h_0\big\|_{L^{\frac{2(d+2)}{d}}_{tx}\big((0,\tau)\times \R^d\big)} \to 0, \quad \mbox{when } \tau\to 0.
$$
Further, since $w_1,w_2\in C([0,\delta], L^2(\R^d))$, we get
$$
\lim\limits_{\tau\to0} \rho(\tau)\to 0.
$$
Hence, choosing $\tau$ small enough and from \eqref{22.38}, we must have that $w\equiv 0$ on $t\in [0,\tau)$. By iteration again, we must have $w_1\equiv w_2$ on $[0,\delta]$.
Thus we have proved Theorem \ref{thm:main1}.

\section{The proof of Theorem \ref{thm:main2} and Theorem \ref{thm:main3}}

\subsection{The proof of Theorem \ref{thm:main2}}
First, \underline{we deal with the global existence of the Cauchy} \underline{problem \eqref{heat}.}  In  the focusing case (i.e. $\mu=1$), if $\|h_0\|_{\dot  H^{-\gamma_0}(\R^d)}\ll 1$, then from Lemma \ref{lem:local} we have the global existence of the solution.
In the defocusing case (i.e. $\mu=-1$), note that $h=v_L+w$ and
\begin{align}
\big\|v_L(t)\big\|_{L^2(\R^d)}=&\big\|e^{-t|\xi|^2}\widehat{v_0}(\xi)\big\|_{L^2_\xi(\R^d)}\notag\\
\lesssim& \big\|e^{-t|\xi|^2}|\xi|^{\gamma_0}\big\|_{L^\infty_\xi(\R^d)}\|v_0\|_{\dot H^{-\gamma_0}}
\lesssim t^{-\frac{\gamma_0}{2}}\|h_0\|_{\dot H^{-\gamma_0}}.\label{9.26}
\end{align}
Hence, from Lemma \ref{lem:local} again, we have $h(\delta)\in L^2(\R^d)$. Let $I=[\delta,T^*)$ be the maximal lifespan of the solution $h$ of the Cauchy problem \eqref{heat} from the time $t_0=\delta$. Then from the $L^2$ estimate of the solution ( by doing the inner product with $h$ in \eqref{heat}), we have
\begin{align*}
\sup\limits_{t\in I}\|h\|_{L^2}^2+\|\nabla h\|_{L^2_{tx}(I\times \R^d)}^2\le \|h(\delta)\|_{L^2}^2.
\end{align*}
This gives the boundedness of both $\big\|h\big\|_{L^\frac{2(2+d)}{d}_{tx}(I\times\R^d)}$ and $\big\|w\big\|_{L^\frac{2(2+d)}{d}_{tx}(I\times\R^d)}$. Thus  by the global criteria given in  Lemma \ref{lem:local}, we have $T^*=+\infty$.

\subsection{The proof of Theorem \ref{thm:main3}}

Next, \underline{we consider the $L^2$ estimate of the solution,} i.e. the following point-wise decay estimate:
 \begin{equation}\label{decay'}
\|h(t)\|_{L^2}\lesssim C(\|h_0\|_{\dot H^{-\gamma_0}})\ t^{-\frac{\gamma_0}{2}},  \ \ t>0,
\end{equation}
 where $C\big(\|h_0\|_{\dot H^{-\gamma_0}}\big)=O\big(\|h_0\|_{\dot H^{-\gamma_0}}^\kappa\big)$ for some $\kappa>0$. We emphasize that the same notation $C\big(\|h_0\|_{\dot H^{-\gamma_0}}\big)$ is often used in the sequel,  but the power $\kappa$ may change in the different places.  To do this, we divide the following two cases to prove the \eqref{decay'}.

\subsubsection{The focusing case ($\mu=1$)}
When $t\le 1$, it follows from \eqref{9.26} and Lemma \ref{lem:local}, that
\begin{align}\label{local-decay}
\|h(t)\|_{L^2}\le C\big(\|h_0\|_{\dot H^{-\gamma_0}}\big)\ t^{-\frac{\gamma_0}{2}} , \quad  \mbox{ for any } t\in (0,1].
\end{align}
For $t\ge1$, we first claim that  for any $t\le T$ and any fixed $T>1$,
\begin{align}
\|h(t)\|_{L^2(\R^d)}
\le
t^{-\frac{\gamma_0}{2}} C\big(\|h_0\|_{\dot H^{-\gamma_0}}\big)\big(\|h\|_{X(T)}+1\big),\label{claim1}
\end{align}
where
\begin{align}\label{Norm}
\|h\|_{X(T)}=\sup\limits_{t\in[0,T]}\Big(t^\frac{\gamma_0}{2}\|h(t)\|_{L^2(\R^d)}\Big).
\end{align}
Indeed, by Duhamel's formula,  we have
\begin{align*}
\|h(t)\|_{L^2(\R^d)}\le \big\|e^{t\Delta}h_0\big\|_{L^2(\R^d)}+\Big\|\int_0^t e^{(t-s)\Delta}|h(s)|^\frac4dh(s)\,ds\Big\|_{L^2(\R^d)}.
\end{align*}
Similar as \eqref{9.26}, we have
$$
\big\|e^{t\Delta}h_0\big\|_{L^2(\R^d)}\lesssim t^{-\frac{\gamma_0}{2}}\|h_0\|_{\dot H^{-\gamma_0}}.
$$
Then using the estimate above, we further have
\begin{align}
\|h(t)\|_{L^2(\R^d)}
\lesssim & t^{-\frac{\gamma_0}{2}}\|h_0\|_{\dot H^{-\gamma_0}}+ \Big\|\int_0^t e^{(t-s)\Delta}|h(s)|^\frac4dh(s)\,ds\Big\|_{L^2(\R^d)}\notag\\
\lesssim & t^{-\frac{\gamma_0}{2}}\|h_0\|_{\dot H^{-\gamma_0}}+ \Big\|\int_0^{\frac t2}e^{(t-s)\Delta}|h(s)|^\frac4dh(s)\,ds\Big\|_{L^2(\R^d)}\notag\\
&\qquad\qquad+\Big\|\int_{\frac t2}^t e^{(t-s)\Delta}|h(s)|^\frac4dh(s)\,ds\Big\|_{L^2(\R^d)}.\label{7.09-0606}
\end{align}
\underline{For the second term in \eqref{7.09-0606}}, we denote
$$
p_*=1, \mbox{ when } 2\le d< 4;\quad p_*=\frac{2d}{d+4}, \mbox{ when } d\ge 4.
$$

First consider the cases $d=2,3$.  By the decay estimate \eqref{Linear-decay} we have
\begin{align}\label{eq.4.7}
\Big\|\int_0^{\frac t2} & e^{(t-s)\Delta}|h(s)|^\frac4dh(s)\,ds\Big\|_{L^2(\R^d)}\notag\\
\lesssim &
\int_0^{\frac t2} |t-s|^{-\frac d2(\frac1{p_*}-\frac12)} \Big\||h(s)|^\frac4dh(s)\Big\|_{L^{p_*}(\R^d)}\,ds\notag\\
\lesssim  &
t^{-\frac d4}\left[\int_0^{\frac12} \big\|h(s)\big\|_{L^{\frac{d+4}{d}}(\R^d)}^{\frac{d+4}{d}}\,ds
+\int_{\frac12}^{\frac t2} \big\|h(s)\big\|_{L^{\frac{d+4}{d}}(\R^d)}^{\frac{d+4}{d}}\,ds\right]=(I)+(II).
\end{align}
For the term $(I)$, by interpolation and H\"older's inequality, we  obtain
\begin{align}\label{eq.4.8}
(I)\le t^{-\frac d4}&\Big(\int_0^{\frac12} \big\|h(s)\big\|_{L^{\frac{2(d+2)}{d}}(\R^d)}^{\frac{2(d+2)}{d}}\,ds\Big)^{1-d/4}\Big(\int_0^{\frac12} \big\|h(s)\big\|_{L^2(\R^d)}^2\,ds\Big)^{d/4}\notag\\
&\lesssim t^{-\frac{d}4 }C\big(\|h_0\|_{\dot  H^{-\gamma_0}(\R^d)}\big)\Big(\int_0^{\frac12} s^{-\gamma_0}\Big)^{d/4}\notag\\
&\lesssim t^{-\frac{d}4 }C\big(\|h_0\|_{\dot  H^{-\gamma_0}(\R^d)}\big),
\end{align}
where we also use the facts $ h\in L^{\frac{2(d+2)}{d}}_{tx}(R^+\times \R^d)$ and the short-time decay estimate \eqref{local-decay}.

For the term $(II)$, by Gagliardo-Nirenberg's inequality ( see e.g. Tao \cite{tao} ), we have that
\begin{align}\label{eq.4.9}
(II)\lesssim t^{-\frac{d}4 }\int_{\frac12}^{\frac t2}\big\|\nabla h(s)\big\|_{L^2(\R^d)}^{\frac4d+1-\theta_d}\big\| h(s)\big\|_{L^2(\R^d)}^{\theta_d}  \,ds,
\end{align}
where  $\theta_2=2$ for $d=2$ and $\theta_3=\frac{11}6$ for $d=3$.

Note that by Lemma \ref{lem:local},
\begin{align}\label{mass}
\sup\limits_{t\in [\frac12,+\infty)}\|h\|_{L^2}^2+\|\nabla h\|_{L^2_{tx}([\frac12,+\infty)\times \R^d)}^2\lesssim C\big(\|h_0\|_{\dot  H^{-\gamma_0}(\R^d)}\big)
\end{align}
holds for  either $\mu=-1$ (the defocusing case) or $\mu=1$ (the focusing case) with $\|h_0\|_{\dot  H^{-\gamma_0}(\R^d)}\ll 1.$
 Hence, when $d=2$, $\theta_2=2$, then  by the \eqref{eq.4.9} and \eqref{mass},
 \begin{align}\label{eq.4.11}
 (II)\lesssim & t^{-\frac12}\int_{\frac12}^{\frac t2}\big\|\nabla h(s)\big\|_{L^2(\R^d)}\big\| h(s)\big\|_{L^2(\R^d)}^2  \,ds\notag\\
 \lesssim & t^{-\frac 12}\left(\int_{\frac12}^{\frac t2}\big\|\nabla h(s)\big\|_{L^2(\R^d)}^2\,ds\right)^\frac12
 \left(\int_{\frac12}^{\frac t2}\big\| h(s)\big\|_{L^2(\R^d)}^{4}\,ds\right)^\frac12\notag\\
 \lesssim & t^{-\frac{\gamma_0}2}C\big(\|h_0\|_{\dot  H^{-\gamma_0}(\R^d)}\big)\|h\|_{X(T)}.
 \end{align}
 While for $d=3$,  we have $\theta_3=\frac{11}6$ and
  \begin{align}\label{eq.4.12}
 (II)\lesssim &t^{-\frac 34}\int_{\frac12}^{\frac t2}\big\|\nabla h(s)\big\|_{L^2(\R^d)}^{\frac12}\big\| h(s)\big\|_{L^2(\R^d)}^{\frac{11}{6}}\,ds\notag\\
\lesssim & t^{-\frac 34}\left(\int_{\frac12}^{\frac t2}\big\|\nabla h(s)\big\|_{L^2(\R^d)}^2\,ds\right)^\frac14
 \left(\int_{\frac12}^{\frac t2}\big\| h(s)\big\|_{L^2(\R^d)}^{\frac{44}{18}}\,ds\right)^\frac34\notag\\
 \lesssim & t^{-\frac12 \gamma_0}C\big(\|h_0\|_{\dot  H^{-\gamma_0}(\R^d)}\big)\|h\|_{X(T)}.
 \end{align}
Combining \eqref{eq.4.8}, \eqref{eq.4.11} and \eqref{eq.4.12}, this yields that when $d=2,3$,
 \begin{align*}
\int_0^{\frac t2} |t-s|^{-\frac d4(\frac1{p_*}-\frac12)} \Big\||h(s)|^\frac4dh(s)\Big\|_{L^{p_*}(\R^d)}\,ds
&\lesssim
(I)+(II)\\
&\lesssim
t^{-\frac{\gamma_0}2}C\big(\|h_0\|_{\dot  H^{-\gamma_0}(\R^d)}\big)\big(1+\|h\|_{X(T)}\big).
\end{align*}
Now we consider the estimates when $d\ge 4$. Done similarly as above, we can obtain
 \begin{align*}
\int_0^{\frac t2} |t-s|^{-\frac d2(\frac1{p_*}-\frac12)} \Big\||h(s)|^\frac4dh(s)\Big\|_{L^{p_*}(\R^d)}\,ds
=&
\int_0^{\frac t2} |t-s|^{-1} \big\|h(s)\big\|_{L^2(\R^d)}^{\frac4d+1}\,ds\\
\lesssim  &
t^{-1}\int_0^{\frac t2} \big\|h(s)\big\|_{L^2(\R^d)}^{\frac4d+1}\,ds\\
\lesssim  &
t^{-\frac12 \gamma_0}C\big(\|h_0\|_{\dot  H^{-\gamma_0}(\R^d)}\big)(1+\|h\|_{X(T)}).
\end{align*}
Combining with the estimates above, we get that for any $d\ge 2$,
\begin{align}
\Big\|\int_0^{\frac t2} & e^{(t-s)\Delta}|h(s)|^\frac4dh(s)\,ds\Big\|_{L^2(\R^d)}
\le
t^{-\frac12 \gamma_0}C\big(\|h_0\|_{\dot  H^{-\gamma_0}(\R^d)}\big)\big(1+\|h\|_{X(T)}\big).
\label{2020-15.21}
\end{align}

\underline{For the third term in \eqref{7.09-0606}}, by positivity of the heat flow $e^{t\Delta}$, we have
\begin{align*}
\Big\|\int_{\frac t2}^t e^{(t-s)\Delta}|h(s)|^\frac4dh(s)\,ds\Big\|_{L^2(\R^d)}
\lesssim &
t^{-\frac12\gamma_0}\Big\|\int_0^t e^{(t-s)\Delta} s^{\frac12\gamma_0}|h(s)|^{\frac4d+1}\,ds\Big\|_{L^2(\R^d)}.
\end{align*}
Then by Lemma \ref{lem:Stri-1} and Sobolev's embedding estimates, we get that
\begin{align*}
\Big\|\int_{\frac t2}^t e^{(t-s)\Delta}|h(s)|^\frac4dh(s)\,ds\Big\|_{L^2(\R^d)}
\lesssim &
t^{-\frac12\gamma_0}\Big\| s^{\frac12\gamma_0}|h(s)|^{\frac4d+1}\Big\|_{L^{\frac d2}_tL^{\frac{2d^2}{d(d+4)-8}}_x(\R^+\times\R^d)}\\
\lesssim &
t^{-\frac12\gamma_0} \big\|\nabla h(s)\big\|_{L^2_{tx}(\R^+\times\R^d)}^{\frac4d}\Big\| s^{\frac12\gamma_0}h(s)\Big\|_{L^\infty_tL^2_x(\R^+\times\R^d)}\\
\lesssim &
 t^{-\frac12 \gamma_0}C\big(\|h_0\|_{\dot  H^{-\gamma_0}(\R^d)}\big)\|h\|_{X(T)},
\end{align*}
which along with \eqref{7.09-0606} and \eqref{2020-15.21},  gives the claim \eqref{claim1}.

Now with \eqref{claim1} in hand. If $\|h_0\|_{\dot H^{-\gamma_0}}\ll 1$, then $$C\big(\|h_0\|_{\dot  H^{-\gamma_0}(\R^d)}\big)=O\big(\|h_0\|_{\dot H^{-\gamma_0}}^\alpha\big)\ll 1,$$ which leads to $\|h\|_{X(T)}\lesssim C\big(\|h_0\|_{\dot  H^{-\gamma_0}(\R^d)}\big)$ from the \eqref{claim1} for any $T>0$.  Thus,  it immediately gives the $L^2$-estimate \eqref{decay'} in the focusing case. For the defocusing case, however, it is not sufficient to establish the $L^2$-estimate with a large data $h_0$, which requires us more work.

\subsubsection{The defocusing case}
As described above, the estimate \eqref{claim1} is not enough to obtain the desired decay result in the defocusing case. To establish the desired result, we introduce  a different approach.
Similarly as above, when $t\le 1$, it follows from \eqref{9.26} and Lemma \ref{lem:local}, that
$$\|h(t)\|_{L^2}\lesssim C\big(\|h_0\|_{\dot H^{-\gamma_0}}\big)\ t^{-\frac{\gamma_0}{2}}, \quad  \mbox{ for any } t\in (0,1].
$$
\underline{So we only need to consider $t\ge 1$}. In the sequel,  we only give the proof when $d=2$ (the general cases can be obtained by suitably adjusting the parameters as before).

Let us begin with establishing a modified estimate of \eqref{claim1} as follows:
\begin{lem}\label{lem:weak}
Let $d=2$, $N(t)$ be a positive time-dependent function and $\alpha\in (0,\gamma_0]$. Then
\begin{align}
\big\|P_{\le N(t)}h(t)\big\|_{L^2(\R^2)}
\le
t^{-\frac{\alpha}{2}}C\big(\|h_0\|_{\dot H^{-\gamma_0}}\big)\Big(1+N(t)t^{\frac12\alpha}+N(t)t^{\frac{1}2}\ \|h\|_{Y_\alpha(T)}\Big),\label{claim2}
\end{align}
where $1\le t\le T$ for any fixed $T>1$ and $$
\|h\|_{Y_\alpha(T)}=\sup\limits_{t\in[0,T]}\Big(t^\frac{\alpha}{2}\|h(t)\|_{L^2(\R^2)}\Big).
$$
\end{lem}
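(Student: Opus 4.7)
The plan is to insert the low-frequency projector $P_{\le N(t)}$ into the Duhamel formula for $h$, and to exploit the $L^1\to L^2$ Bernstein inequality $\|P_{\le N(t)}g\|_{L^2(\R^2)}\lesssim N(t)\|g\|_{L^1}$ (available in $d=2$) together with the $L^1$-contractivity of the heat semigroup in order to extract the crucial factor $N(t)$. Writing $h(t)=e^{t\Delta}h_0 - \int_0^t e^{(t-s)\Delta}|h|^2h\,ds$ and applying $P_{\le N(t)}$, the linear piece is controlled directly by \eqref{9.26}: $\|P_{\le N(t)}e^{t\Delta}h_0\|_{L^2}\le \|e^{t\Delta}h_0\|_{L^2}\lesssim t^{-\gamma_0/2}\|h_0\|_{\dot H^{-\gamma_0}}$, and since $t\ge 1$ with $\alpha\le\gamma_0$ this is bounded by $Ct^{-\alpha/2}\|h_0\|_{\dot H^{-\gamma_0}}$, which accounts for the ``$1$'' in the bracket.

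For the nonlinear Duhamel term, combining Bernstein with $\|e^{(t-s)\Delta}F\|_{L^1}\le\|F\|_{L^1}$ yields $\|P_{\le N(t)}e^{(t-s)\Delta}F\|_{L^2}\lesssim N(t)\|F\|_{L^1}$. Taking $F=|h|^2h$ so that $\|F\|_{L^1}=\|h\|_{L^3}^3$, and invoking the $d=2$ Gagliardo-Nirenberg estimate $\|h\|_{L^3}^3\lesssim \|h\|_{L^2}^2\|\nabla h\|_{L^2}$, the problem reduces to controlling $N(t)\int_0^t\|h(s)\|_{L^2}^2\|\nabla h(s)\|_{L^2}\,ds$, which I would split as $\int_0^1+\int_1^t$. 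On $[0,1]$, I would use the interpolation $\|h\|_{L^3}^3\le\|h\|_{L^2}\|h\|_{L^4}^2$, the short-time decay \eqref{local-decay}, the hypothesis $\gamma_0<1$, and the $L^4_{tx}$ local bound from Lemma~\ref{lem:local}; Cauchy-Schwarz then yields a uniform constant, contributing $CN(t)$.

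On $[1,t]$ the pivotal step is to split the two $L^2$ factors asymmetrically: one is controlled by the global mass bound $\|h(s)\|_{L^2}\le C$ (from the $L^2$-energy identity in Section~4.1), while the other is controlled by the running decay $\|h(s)\|_{L^2}\le s^{-\alpha/2}\|h\|_{Y_\alpha(T)}$. This gives $\int_1^t\|h\|_{L^2}^2\|\nabla h\|_{L^2}\,ds\le C\|h\|_{Y_\alpha(T)}\int_1^t s^{-\alpha/2}\|\nabla h\|_{L^2}\,ds$, and Cauchy-Schwarz together with $\int_1^\infty\|\nabla h\|_{L^2}^2\le C$ and $\int_1^t s^{-\alpha}\,ds\lesssim t^{1-\alpha}$ (using $\alpha\le\gamma_0<1$) yields $\lesssim t^{(1-\alpha)/2}\|h\|_{Y_\alpha(T)}$, contributing $CN(t)t^{(1-\alpha)/2}\|h\|_{Y_\alpha(T)}$ after multiplication by $N(t)$. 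Summing the three contributions and factoring out $t^{-\alpha/2}$ recovers the claim. I expect the main subtlety to be precisely this asymmetric allocation: pairing $\|h\|_{Y_\alpha(T)}$ on both $L^2$ factors would instead produce a $\|h\|_{Y_\alpha(T)}^2$ term with $t$-exponent $(1-2\alpha)/2$, which would be incompatible with the linear-in-$\|h\|_{Y_\alpha(T)}$ bootstrap used in Step~2 of the sketch.
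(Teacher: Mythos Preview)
Your proposal is correct and follows essentially the same route as the paper: Duhamel plus the $L^1\!\to\! L^2$ Bernstein gain of $N(t)$, the $\|h\|_{L^3}^3\le\|h\|_{L^2}\|h\|_{L^4}^2$ interpolation with \eqref{local-decay} on $[0,1]$, and Gagliardo--Nirenberg with the asymmetric mass/$Y_\alpha$ split on $[1,t]$. The only cosmetic difference is that the paper applies Cauchy--Schwarz first to obtain $\bigl(\int_1^t\|\nabla h\|_{L^2}^2\bigr)^{1/2}\bigl(\int_1^t\|h\|_{L^2}^4\bigr)^{1/2}$ and then bounds two of the four $L^2$ factors by mass and two by $Y_\alpha$, which is arithmetically identical to your ordering.
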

\begin{proof}
By Duhamel's formula and Bernstein's inequality, we have that
\begin{align}\label{eq.4.15}
\big\|P_{\le N(t)}h(t)\big\|_{L^2(\R^2)}
\lesssim & t^{-\frac{{\gamma_0}}{2}}\|P_{\le N(t)}h_0\|_{\dot H^{-\gamma_0}}+ \Big\|P_{\le N(t)}\int_0^t e^{(t-s)\Delta}|h(s)|^2h(s)\,ds\Big\|_{L^2(\R^2)}\notag\\
\lesssim & t^{-\frac{{\gamma_0}}{2}}\|h_0\|_{\dot H^{-\gamma_0}}+\int_0^t \Big\|\big(||P_{\le N(t)}|h(s)|^2h(s)\big)\Big\|_{L^2(\R^2)}ds\notag\\
\lesssim & t^{-\frac{\gamma_0}{2}}\|h_0\|_{\dot H^{-\gamma_0}}+N(t)\int_0^t \Big\|\big(|h(s)|^2h(s)\big)\Big\|_{L^1(\R^2)}\,ds\notag\\
=& t^{-\frac{\gamma_0}{2}}\|h_0\|_{\dot H^{-\gamma_0}}+N(t)\int_0^t \big\|h(s)\big\|_{L^3(\R^2)}^3\,ds.
\end{align}
Note that by $L^2$ estimate, the inequality \eqref{mass} still holds in the defocusing case without smallness assumption.
So treated similarly as above ( See the \eqref{eq.4.7}-\eqref{eq.4.12} for $d=2$ ), we can obtain
\begin{align*}
\int_0^t \big\|h(s)\big\|_{L^3(\R^2)}^3\,ds
=&\int_0^1 \big\|h(s)\big\|_{L^3(\R^2)}^3\,ds+\int_1^t \big\|h(s)\big\|_{L^3(\R^2)}^3\,ds\\
\lesssim &\int_0^1 \big\|h(s)\big\|_{L^3(\R^2)}^3\,ds+\int_1^t \big\|\nabla h(s)\big\|_{L^2(\R^2)}\big\|h(s)\big\|_{L^2(\R^2)}^2\,ds\\
\lesssim & \Big(\int_0^{1} \big\|h(s)\big\|_{L^{4}(\R^2)}^{4}\,ds\Big)^{\frac12}\Big(\int_0^{1} \big\|h(s)\big\|_{L^2(\R^2)}^2\,ds\Big)^{\frac12}\\
&+\ \left(\int_1^t \big\|\nabla h(s)\big\|_{L^2(\R^2)}^2\,ds\right)^\frac12\left(\int_1^t \big\| h(s)\big\|_{L^2(\R^2)}^4\,ds\right)^\frac12\\
\lesssim & C\big(\|h_0\|_{\dot H^{-\gamma_0}}\big)
+t^{\frac12-\frac12\alpha}C\big(\|h_0\|_{\dot H^{-\gamma_0}}\big)\|h\|_{Y_\alpha(T)},
\end{align*}
which combining the \eqref{eq.4.15} immediately gives the estimate \eqref{claim2}.
\end{proof}

Next, we can establish the following weak decay result.
\begin{prop}\label{prop:weak}
Let $\alpha\in (0,\gamma_0]$ be suitable small, then
\begin{align}
\|h(t)\|_{L^2(\R^2)}\le C\big(\|h_0\|_{\dot H^{-\gamma_0}}\big) t^{-\frac{\alpha}{2}}, \quad  \mbox{ for any } t\ge 1.\label{alpha-decay}
\end{align}
\end{prop}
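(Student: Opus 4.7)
The plan is a bootstrap on the weighted norm $\|h\|_{Y_\alpha(T)}$ driven by a modified parabolic energy inequality with a time-dependent frequency cutoff, exactly following Step 2 of the sketch in the introduction. The short-time interval $t\in(0,1]$ is already absorbed into the constant via \eqref{local-decay}, so the task reduces to establishing \eqref{alpha-decay} uniformly for $t\in[1,T]$ and letting $T\to\infty$; a priori finiteness of $\sup_{t\in[1,T]}t^{\alpha/2}\|h(t)\|_{L^2}$ is guaranteed by the mass bound \eqref{mass}, which legitimises the bootstrap.

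First I would produce the driving differential inequality. In the defocusing case the standard $L^2$ identity gives
$$\frac{d}{dt}\|h(t)\|_{L^2}^2=-2\|\nabla h(t)\|_{L^2}^2-2\|h(t)\|_{L^{2+4/d}}^{2+4/d}\le-2\|\nabla h\|_{L^2}^2,$$
and combining with the Plancherel split
$$\|\nabla h\|_{L^2}^2\ge N(t)^2\|P_{>N(t)}h\|_{L^2}^2=N(t)^2\bigl(\|h\|_{L^2}^2-\|P_{\le N(t)}h\|_{L^2}^2\bigr)$$
produces
$$\frac{d}{dt}\|h\|_{L^2}^2+2N(t)^2\|h\|_{L^2}^2\le 2N(t)^2\|P_{\le N(t)}h\|_{L^2}^2,\qquad t\ge 1.$$

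Next I would calibrate $N(t)^2=\alpha/t$, so that the integrating factor $\exp\bigl(\int_1^t 2N(s)^2\,ds\bigr)=t^{2\alpha}$ matches the target decay rate. For this choice one has $N(s)s^{1/2}=\sqrt{\alpha}$ and $N(s)s^{\alpha/2}=\sqrt{\alpha}\,s^{(\alpha-1)/2}\le\sqrt{\alpha}$ for $s\ge 1$ and $\alpha\le 1$, so squaring \eqref{claim2} collapses it to
$$\|P_{\le N(s)}h(s)\|_{L^2}^2\le s^{-\alpha}\,C(\|h_0\|_{\dot H^{-\gamma_0}})\bigl(1+\alpha\|h\|_{Y_\alpha(T)}^2\bigr).$$
Multiplying the differential inequality by the integrating factor, integrating from $1$ to $T$, and substituting this bound, the time integral $\int_1^T s^{2\alpha-1}\cdot s^{-\alpha}\,ds\sim T^\alpha/\alpha$ yields
$$T^{2\alpha}\|h(T)\|_{L^2}^2\le\|h(1)\|_{L^2}^2+C(\|h_0\|_{\dot H^{-\gamma_0}})\,T^\alpha\bigl(1+\alpha\|h\|_{Y_\alpha(T)}^2\bigr).$$
Dividing by $T^\alpha$, taking the sup over $T\in[1,T_0]$, and gluing with \eqref{local-decay} on $(0,1]$, I arrive at
$$\|h\|_{Y_\alpha(T_0)}^2\le C(\|h_0\|_{\dot H^{-\gamma_0}})+C(\|h_0\|_{\dot H^{-\gamma_0}})\,\alpha\,\|h\|_{Y_\alpha(T_0)}^2.$$
Choosing $\alpha$ so small that the last coefficient is $\le 1/2$ absorbs the quadratic term and yields \eqref{alpha-decay}.

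The main obstacle is the precise balance in the choice $N(t)^2\sim\alpha/t$: the cutoff must be small enough that both $N(t)t^{\alpha/2}$ and $N(t)t^{1/2}\|h\|_{Y_\alpha(T)}$ in \eqref{claim2} carry only an $O(\sqrt{\alpha})$ factor (so that squaring yields the $O(\alpha)$ smallness needed to close the bootstrap), yet large enough that $\int N(s)^2\,ds$ produces an integrating factor with exponent exactly $2\alpha$, which is what pairs with the target rate $t^{-\alpha/2}$. A secondary subtlety is that for $\alpha<\gamma_0$ the short-time bound \eqref{local-decay} does not by itself make $t^{\alpha/2}\|h(t)\|_{L^2}$ bounded as $t\to 0^+$; this is handled by running the bootstrap only on $t\in[1,T]$ and gluing in the $(0,1]$ piece separately.
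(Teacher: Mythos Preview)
Your proof is correct and follows essentially the same approach as the paper: the modified energy inequality via the frequency split, the calibration $N(t)^2\sim\alpha/t$ to produce the integrating factor $t^{2\alpha}$, the application of Lemma~\ref{lem:weak}, and the absorption of the $\|h\|_{Y_\alpha(T)}^2$ term by choosing $\alpha$ small are all identical to the paper's argument (your choice $N(t)^2=\alpha/t$ together with the factor $2$ in the energy inequality is equivalent to the paper's $N(t)^2=2\alpha/t$). Your remark that the bootstrap should be run on $[1,T]$ to avoid the short-time blow-up of $t^{\alpha/2}\|h(t)\|_{L^2}$ when $\alpha<\gamma_0$ is a useful clarification that the paper leaves implicit.
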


\begin{proof}Consider the mass estimate:
\begin{align*}
\frac{d}{dt}\|h(t)\|_{L^2}^2+\|\nabla h(t)\|_{L^2}^2\le 0.
\end{align*}
This implies that
\begin{align*}
\frac{d}{dt}\|h(t)\|_{L^2}^2+N(t)^2\left\|P_{\ge N(t)}h(t)\right\|_{L^2}^2\le 0,
\end{align*}
and thus
\begin{align*}
\frac{d}{dt}\|h(t)\|_{L^2}^2+N(t)^2\big\|h(t)\big\|_{L^2}^2\le N(t)^2\left\|P_{\le N(t)}h(t)\right\|_{L^2}^2.
\end{align*}
Then applying Lemma \ref{lem:weak}, we get that
\begin{align*}
\frac{d}{dt}\|h(t)\|_{L^2}^2+N(t)^2\big\|h(t)\big\|_{L^2}^2\le N(t)^2t^{-\alpha}C\big(\|h_0\|_{\dot H^{-\gamma_0}}\big)\big(1+N(t)t^{\frac12\alpha}+N(t)t^{\frac12}\|h\|_{Y_\alpha(T)}\big)^2.
\end{align*}
Now we set $N(t)$ such that
$$
N(t)^2=2\alpha t^{-1},
$$
then we obtain that
\begin{align*}
\frac{d}{dt}\left(t^{2\alpha}\|h(t)\|_{L^2}^2\right)\le 2\alpha t^{-1+\alpha}C\big(\|h_0\|_{\dot H^{-\gamma_0}}\big)\big(1+2\alpha\|h\|_{Y_\alpha(T)}^2\big).
\end{align*}
This gives that for any $t\ge 1$,
\begin{align*}
t^{2\alpha}\|h(t)\|_{L^2}^2
\le
&
\|h(1)\|_{L^2}^2+2\alpha\int_1^t s^{-1+\alpha}C\big(\|h_0\|_{\dot H^{-\gamma_0}}\big)\big(1+2\alpha\|h\|_{Y_\alpha(T)}^2\big)\,ds\\
\le
&
\|h(1)\|_{L^2}^2+t^{\alpha}C\big(\|h_0\|_{\dot H^{-\gamma_0}}\big)\big(1+2\alpha\|h\|_{Y_\alpha(T)}^2\big).
\end{align*}
This yields that
\begin{align*}
t^{\alpha}\|h(t)\|_{L^2}^2
\le
&
C\big(\|h_0\|_{\dot H^{-\gamma_0}}\big)+2\alpha C\big(\|h_0\|_{\dot H^{-\gamma_0}}\big)\|h\|_{Y_\alpha(T)}^2.
\end{align*}
Therefore, when $\alpha$ is suitable small such that
$$
2\alpha C\big(\|h_0\|_{\dot H^{-\gamma_0}}\big)\le \frac12,
$$
then we obtain that
$$
\|h\|_{Y_\alpha(T)}\le  C\big(\|h_0\|_{\dot H^{-\gamma_0}}\big).
$$
Since the estimate is independent of $T$, this gives  the desired \eqref{alpha-decay}.
\end{proof}

Finally, based on Lemma \ref{lem:weak} and Proposition \ref{prop:weak}, we come to establish the $L^2$-estimate \eqref{decay'} with a large data $h_0$ in the defocusing case.
Repeat the process above, we first claim that
\begin{align}
\big\|P_{\le N(t)}h(t)\big\|_{L^2(\R^2)}
\le
t^{-\frac{\gamma_0}{2}}C\big(\|h_0\|_{\dot H^{-\gamma_0}}\big)\big(1+N(t)t^{\frac12\gamma_0}+N(t)t^{\frac{1-\alpha}2}\|h\|_{Y_{\gamma_0}(T)}\big).\label{claim3}
\end{align}
Indeed, by the estimate in the proof of Lemma \ref{lem:weak}, we have that
\begin{align*}
\big\|P_{\le N(t)}h(t)\big\|_{L^2(\R^2)}
\lesssim & t^{-\frac{\gamma_0}{2}}\|h_0\|_{\dot H^{-\gamma_0}}+N(t)C\big(\|h_0\|_{\dot H^{-\gamma_0}}\big)\\
&\qquad+N(t)\left(\int_1^t \big\|\nabla h(s)\big\|_{L^2(\R^2)}^2\,ds\right)^\frac12\left(\int_1^t \big\| h(s)\big\|_{L^2(\R^2)}^4\,ds\right)^{\frac12}.
\end{align*}
Now using the weak decay estimate \eqref{alpha-decay} Proposition \ref{prop:weak},  we can further obtain
\begin{align*}
\big\|P_{\le N(t)}h(t)\big\|_{L^2(\R^2)}
\lesssim  & t^{-\frac{\gamma_0}{2}}\|h_0\|_{\dot H^{-\gamma_0}}+N(t)C\big(\|h_0\|_{\dot H^{-\gamma_0}}\big)
+N(t)t^{\frac12-\frac12\alpha-\frac{1}{2}\gamma_0}C\big(\|h_0\|_{\dot H^{-\gamma_0}}\big)\|h\|_{Y_{\gamma_0}(T)}\\
\lesssim & t^{-\frac{\gamma_0}{2}}C\big(\|h_0\|_{\dot H^{-\gamma_0}}\big)\Big(1+N(t)t^{\frac12\gamma_0}+N(t)t^{\frac12-\frac12\alpha}\|h\|_{Y_{\gamma_0}(T)}\Big).
\end{align*}
This gives the desired  \eqref{claim3}.

Treated similarly as in the proof of the estimate \eqref{alpha-decay} above, we use the mass estimate and then apply \eqref{claim3}, to obtain
\begin{align*}
\frac{d}{dt}\|h(t)\|_{L^2(\R^2)}^2+N(t)^2\big\|h(t)\big\|_{L^2(\R^2)}^2\le N(t)^2t^{-\gamma_0}C\big(\|h_0\|_{\dot H^{-\gamma_0}}\big)\big(1+N(t)t^{\frac12\gamma_0}+N(t)t^{\frac{1-\alpha}2}\|h\|_{Y_{\gamma_0}(T)}\big)^2.
\end{align*}
At this time,  we set $N(t)$ such that
$$
N(t)^2=2\gamma_0 t^{-1},
$$
then we obtain that
\begin{align*}
\frac{d}{dt}\left(t^{2\gamma_0}\|h(t)\|_{L^2(\R^2)}^2\right)\le 2\gamma_0 t^{-1+\gamma_0}C\big(\|h_0\|_{\dot H^{-\gamma_0}}\big)\big(1+2\gamma_0t^{-\alpha}\|h\|_{Y_{\gamma_0}(T)}^2\big).
\end{align*}
This implies that for any $t\ge 1$,
\begin{align*}
t^{2\gamma_0}\|h(t)\|_{L^2(\R^2)}^2
\le
&
\|h(1)\|_{L^2(\R^2)}^2+2\gamma_0\int_1^t s^{-1+\gamma_0}C\big(\|h_0\|_{\dot H^{-\gamma_0}}\big)\big(1+2\gamma_0s^{-\alpha}\|h\|_{Y_{\gamma_0}(T)}^2\big)\,ds\\
\lesssim
&
\|h(1)\|_{L^2(\R^2)}^2+t^{\gamma_0}C\big(\|h_0\|_{\dot H^{-\gamma_0}}\big)\big(1+\gamma_0 t^{-\alpha}\|h\|_{Y_{\gamma_0}(T)}^2\big).
\end{align*}
Hence we have that
\begin{align*}
t^{\gamma_0}\|h(t)\|_{L^2(\R^2)}^2
\le
&
C\big(\|h_0\|_{\dot H^{-\gamma_0}}\big)+\gamma_0t^{-\alpha} C\big(\|h_0\|_{\dot H^{-\gamma_0}}\big)\|h\|_{Y_{\gamma_0}(T)}^2.
\end{align*}
Now we choose $t_0=t_0(\|h_0\|_{\dot H^{-\gamma_0}})$ is suitable large such that
$$
t_0^{-\alpha} C\big(\|h_0\|_{\dot H^{-\gamma_0}}\big)\le \frac12,
$$
then we obtain that for any $t\ge t_0$,
\begin{align*}
t^{\gamma_0}\|h(t)\|_{L^2(\R^2)}^2
\le
&
C\big(\|h_0\|_{\dot H^{-\gamma_0}}\big)+\frac12\|h\|_{Y_{\gamma_0}(T)}^2.
\end{align*}
Therefore, this last estimate combining with \eqref{mass} gives that for any $t\ge 1$,
\begin{align*}
t^{\gamma_0}\|h(t)\|_{L^2(\R^2)}^2
\le
&
t_0^{\gamma_0}\|h(t_0)\|_{L^2(\R^2)}^2+ C\big(\|h_0\|_{\dot H^{-\gamma_0}}\big)+\frac12\|h\|_{Y_{\gamma_0}(T)}^2\\
\le &
C\big(\|h_0\|_{\dot H^{-\gamma_0}}\big)+\frac12\|h\|_{Y_{\gamma_0}(T)}^2.
\end{align*}
This allows us to  obtain
$$
\|h\|_{Y_{\gamma_0}(T)}\lesssim  C\big(\|h_0\|_{\dot H^{-\gamma_0}}\big).
$$
Since the estimate is independent of $T$, we give that
\begin{align*}
\|h(t)\|_{L^2(\R^2)}\le C\big(\|h_0\|_{\dot H^{-\gamma_0}}\big) t^{-\frac{\gamma_0}{2}}, \quad  \mbox{ for any } t\ge 1.
\end{align*}
Together with the decay estimate when $t\le 1$, we thus finished the proof of  Theorem \ref{thm:main2}.

\end{document}